\documentclass[12pt,a4paper]{amsart}
\usepackage[T2A]{fontenc}
\usepackage{amsmath}
\usepackage{amssymb,amscd}
\usepackage{enumerate}

\usepackage{fullpage}
\usepackage{hyperref}
\usepackage{ulem}
\normalem

\usepackage{tabularray}
\usepackage{subcaption}
\usepackage{tikz}
\usetikzlibrary{cd,graphs,positioning,arrows,shapes.misc,decorations.pathmorphing}

\theoremstyle{plain}
\newtheorem{theorem}{Theorem}[section]
\newtheorem{lemma}[theorem]{Lemma}
\newtheorem{proposition}[theorem]{Proposition}
\newtheorem{corollary}[theorem]{Corollary}

\theoremstyle{definition}
\newtheorem{definition}[theorem]{Definition}

\newtheorem{notation}[theorem]{Notation}
\newtheorem{remark}[theorem]{Remark}

\theoremstyle{remark}

\def\Aut{\operatorname{Aut}}

\def\GG{{\mathbb G}}

\def\PP{{\mathbb P}}
\def\FF{{\mathbb F}}

\def\QQ{{\mathbb Q}}

\def\NN{{\mathbb N}}
\def\K{{\mathbb K}}

\def\Ga{{\mathbb G}_a}

\renewcommand{\phi}{\varphi}

\makeatletter
\newcommand\thankssymb[1]{\lowercase{\textsuperscript{\@alph{#1}}}}
\makeatother

\address{
HSE University, Faculty of Computer Science\\
Pokrovsky blvd. 11, Moscow, 109028
Russia
}

\email{a@perep.ru}

 \usepackage{xcolor}

\setcounter{tocdepth}{1}

\begin{document}

\title[Additive actions on surfaces]{Additive actions on projective surfaces with a finite number of orbits}
\date{\today}
\author{Alexander Perepechko}
\thanks{
  The article was prepared within the framework of the project ``International academic cooperation'' at the HSE University.}

\dedicatory{Dedicated to Professor Leonid Makar-Limanov on the occasion of his 80th birthday}

\begin{abstract} 
An additive action on an algebraic variety is an effective action of the vector group with an open orbit.
We describe projective surfaces with du Val singularities that admit an additive action with a finite number of orbits.
In particular, we provide examples of projective surfaces with 1-parameter families of pairwise non-isomorphic additive actions, which answers the question by Hassett and Tschinkel.
\end{abstract}

\maketitle

\tableofcontents

\section{Introduction}
Equivariant compactifications of the vector group $\Ga^n$ are actively studied in the recent years,
a systematic approach starting with \cite{HaTs99}.
Compactifications with a finite number of orbits are of particular interest, 
see survey \cite{ArZa22} and recent works \cite{BoChSh24,Sh25}.

We consider the case of dimension two and restrict to projective algebraic surfaces with du Val singularities, see \cite{Ar66,DuVal34I}.
The case of singular del Pezzo surfaces over $\QQ$ was studied in \cite{DeLo10}, 
motivated by the Batyrev--Manin conjecture, see also \cite{FrLo19}.
Possible candidate surfaces were narrowed down  mostly by the fact that any curve with negative self-intersection belongs to the complement of the open orbit.
Then the actions were explicitly constructed in homogeneous coordinates.

The compactifications with a finite number of orbits somewhat mimic in this property toric varieties.
In general, complete toric surfaces admit at most two additive actions up to isomorphism, see \cite[Theorem~3]{Dz21}.
The toric surfaces with a finite number of $\GG_a^2$-orbits are Hirzebruch surfaces, weighted projective planes, and $\PP^1\times\PP^1$, see \cite[Proposition~3]{Sh25}.
If we  consider  the hypersurfaces in $\PP^3$,
 then we obtain an additional non-toric compactification, which is a non-degenerate cubic, see \cite[Corollary~4]{BoChSh24}.

Let $X$ be a projective algebraic surface defined over an algebraically closed field $\K$ of characteristic zero.
Assume that $X$ has at most du Val singularities and admits an action of $\Ga^2$ with a finite number of orbits.
We construct all such surfaces in terms of a blowup model, see Theorem~\ref{th:main}.
In particular, we find out the that blowup of a Hirzebruch surface at two points of a fiber admits a one-dimensional family of pairwise non-isomorphic additive actions, see Remark~\ref{rm:moduli}.
This answers positively the question \cite[Section~5.2(2)]{HaTs99}.

In Section~\ref{sec:prelim-models}, we recall some general facts about equivariant completions of $\Ga^2$ and reduce a general case to an equivariant blowup of the Hirzebruch surface $\FF_r$, where $r\ge0$.
In Section~\ref{sec:local-coords}, we introduce local coordinates at any point at the boundary of an equivariant blowup and express $\Ga^2$-actions in them.

In Section~\ref{sec:blowup-models}, we reduce to the only $\Ga^2$-action on $\FF_r$, see Lemma~\ref{lm:contract-to-Fr-phi}, and study the induced action on the exceptional curves of blowups infinitesimally near the distinguished fiber of $\FF_r$, see Lemma~\ref{lm:fixed-curves}.
This allows us to construct resolutions of singularities for $\Ga^2$-surfaces of interest in Proposition~\ref{pr:blowups} and describe dual graphs of their boundaries in Fig.~\ref{fig:blowups}.

In Section~\ref{sec:isom-classes}, we use a criterion whether two equivariant blowups of a $\Ga^2$-surface are equivariantly isomorphic, see Lemma~\ref{lm:norm-isom}, to write down the isomorphism classes of the resolutions of singularities, see \ref{pr:cases-eq-isom}.
We proceed with Theorem~\ref{th:main}, where we list the isomorphism classes of projective surfaces with at most du Val singularities that admit an additive action with a finite number of orbits.
In particular, such a surface admits at most three 1-dimensional orbits and one fixed point, which is the only singularity if $X$ is not $\PP^2$ or $\FF_r$.
Moreover, any projective $\Ga^2$-surface with a finite number of orbits contains at most one singular point, see Remark~\ref{rm:unique-singularity}.
Finally, we describe surfaces from Theorem~\ref{th:main} that admit an additive action with infinite number of orbits, see Corollary~\ref{cr:inf-orbits}.

The author is grateful to Ivan Arzhantsev,  Anton Shafarevich, Mikhail Zaidenberg, and Yulia Zaitseva for helpful discussions and support.

\section{Equivariant models}\label{sec:prelim-models}
In this section we recall some general facts about equivariant completions of $\Ga^2$.
By a $\Ga^2$-action on a surface $X$ we mean an effective action $\Ga^2\curvearrowright X$ with an open orbit.
By a $\Ga^2$-surface we mean a projective surface with a choice of a $\Ga^2$-action on it.

We recall that a surface is called du Val if its singularities are rational double points. The exceptional divisor of its desingularization consists of $(-2)$-curves.

\begin{lemma}
   An irreducible curve $C$ on a smooth $\Ga^2$-surface with a negative self-in\-ter\-sec\-tion number is $\Ga^2$-stable.
\end{lemma}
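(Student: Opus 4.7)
The plan is to argue by contradiction, combining two standard facts: (i) two distinct irreducible curves on a smooth projective surface meet with non-negative intersection number, and (ii) a connected algebraic group acting algebraically on a projective variety preserves the numerical class of every divisor. I expect the whole argument to fit in a handful of lines, with no real technical difficulty.

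Concretely, I would proceed as follows. Suppose $C$ is not $\Ga^2$-stable; then there exists $g\in\Ga^2$ with $C':=g\cdot C\neq C$. Since $g$ acts as an automorphism of $X$, the image $C'$ is again an irreducible curve, and $(C')^2=C^2<0$. Next I would observe that $\{h\cdot C:h\in\Ga^2\}$ is an algebraic family of effective divisors on $X$ parameterized by the connected variety $\Ga^2\cong\AA^2$, so by (ii) the classes of $C$ and $C'$ agree in the numerical Picard group. Consequently $C\cdot C'=C\cdot C=C^2<0$. On the other hand, since $C\neq C'$ are distinct irreducible curves on a smooth surface, (i) gives $C\cdot C'\ge 0$, a contradiction. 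Hence $g\cdot C=C$ for every $g\in\Ga^2$, i.e.\ $C$ is $\Ga^2$-stable.

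The only step deserving any comment is (ii); this is where one could anticipate a hitch, but in fact it is immediate: the induced homomorphism from the connected group $\Ga^2$ to the automorphism group of the finitely generated (hence discrete) numerical Picard group of $X$ must be trivial, which is equivalent to saying that members of a connected algebraic family of effective divisors are numerically equivalent. So no real obstacle arises, and the statement follows at once from this numerical-equivalence observation together with the non-negativity of the intersection of two distinct irreducible curves.
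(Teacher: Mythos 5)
Your proof is correct. The paper itself gives no argument here --- it simply cites \cite[Proposition~2.3]{HaTs99} --- so there is nothing to compare step by step; your write-up is the standard self-contained proof underlying that reference. The two ingredients you use are both sound: translates $h\cdot C$ for $h$ in the connected group $\Ga^2$ form an algebraic family over $\AA^2$, hence all lie in one class of the (discrete, finitely generated) N\'eron--Severi group, so $C\cdot(g\cdot C)=C^2<0$; and two distinct irreducible curves on a smooth projective surface have no common component and therefore meet non-negatively. (Projectivity, needed for the intersection theory, is part of the paper's standing convention that a $\Ga^2$-surface is projective.) The only cosmetic remark is that the conclusion ``$g\cdot C=C$ for every $g$'' deserves one more word: you showed no single $g$ can move $C$, which is exactly stability, so the contradiction argument closes correctly.
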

\begin{proof}
   This follows from \cite[Proposition 2.3]{HaTs99}.
\end{proof}

\begin{lemma}\label{lm:lift-desing}
   Let $X$ be a normal projective surface,
    and $\eta\colon \widetilde{X}\to X$ be its minimal desingularization.
    Then any $\Ga^2$-action on $X$ is lifted via $\eta$ to $\widetilde X$ and vice versa.
\end{lemma}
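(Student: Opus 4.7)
The plan is to handle the two directions separately; the descent direction is easy given the previous lemma, while the lifting direction requires more care.

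For descent, I observe that the exceptional curves of $\eta$ have negative self-intersection (in fact they are $(-2)$-curves by the du Val hypothesis), hence by the previous lemma each is $\Ga^2$-stable, and the whole exceptional divisor $E$ is preserved. Since $\Ga^2$ is connected and the connected components of $E$ correspond bijectively to the singular points of $X$, each singular point is $\Ga^2$-fixed under the would-be descended action, and the action then descends along $\eta$ (using $\eta_{*}\mathcal{O}_{\widetilde X}=\mathcal{O}_X$ from the normality of $X$).

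For the lifting direction, let $\mu\colon\Ga^2\times X\to X$ be the given action. Since $\Ga^2$ is connected and $\operatorname{Sing}(X)$ is finite, each singular point is $\mu$-fixed, so $U=X\setminus\operatorname{Sing}(X)$ is $\Ga^2$-stable; via the isomorphism $\eta|_{\widetilde U}$ with $\widetilde U=\eta^{-1}(U)$ the action transports to a regular action on $\widetilde U$, giving a morphism $\widetilde\mu_0\colon\Ga^2\times\widetilde U\to\widetilde X$. To extend $\widetilde\mu_0$ across the exceptional divisor, I would invoke the standard functoriality of the minimal desingularization of a normal surface: for any $\phi\in\Aut(X)$, the composite $\phi\circ\eta$ is another minimal desingularization of $X$, so by its uniqueness there is a unique $\widetilde\phi\in\Aut(\widetilde X)$ with $\eta\circ\widetilde\phi=\phi\circ\eta$. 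Applied to each $\mu_g$, this yields pointwise lifts $\widetilde\mu_g$, hence a set-theoretic extension of $\widetilde\mu_0$ to all of $\Ga^2\times\widetilde X$. Letting $\Gamma\subset\Ga^2\times\widetilde X\times\widetilde X$ be the closure of the graph of $\widetilde\mu_0$, the projection $\Gamma\to\Ga^2\times\widetilde X$ onto the first two factors is proper, birational, and set-theoretically bijective (by these pointwise lifts), hence an isomorphism by Zariski's Main Theorem over the smooth target. The other projection then yields the desired regular morphism $\widetilde\mu\colon\Ga^2\times\widetilde X\to\widetilde X$, and the action axioms follow from the uniqueness of lifts. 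The main obstacle is this last regularity step; everything else is mechanical.
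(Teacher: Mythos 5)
Your descent direction is fine, and the first half of your lifting argument (singular points are fixed, the action restricts to $\widetilde U=\eta^{-1}(X\setminus\operatorname{Sing}X)$, and each individual $\mu_g$ lifts to a unique $\widetilde\mu_g\in\Aut(\widetilde X)$ by uniqueness of the minimal resolution) is correct. The gap is in the final step, where you assert that the projection $p\colon\Gamma\to\Ga^2\times\widetilde X$ from the closure of the graph is ``set-theoretically bijective (by these pointwise lifts)''. The pointwise lifts give you only one inclusion: for each $g$ the fiber $\Gamma_g$ of $\Gamma\to\Ga^2$ \emph{contains} the graph of $\widetilde\mu_g$ (so the fibers of $p$ are nonempty and contain the expected point). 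They do not show that $\Gamma_g$ equals that graph, i.e.\ that no extra components supported over $\Ga^2\times E$ (where $E$ is the exceptional divisor) appear in the closure. Ruling out such components is precisely the content of the regularity statement --- it is equivalent to the assertion that the family $g\mapsto\widetilde\mu_g$ varies algebraically --- so the parenthetical remark is circular. Compare the standard degeneration $[x:y]\mapsto[x:ty]$ of automorphisms of $\PP^1$: there the fiber of the graph closure over $t=0$ acquires the extra component $\{[0:1]\}\times\PP^1$ and bijectivity of $p$ fails; your situation differs only in that a genuine automorphism $\widetilde\mu_{g_0}$ sits inside each special fiber, and you must explain why that forbids extra components.

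The claim can be repaired, but it needs a real argument. One route: show that every $\widetilde\mu_g$ fixes each class $[E_i]\in\operatorname{Pic}(\widetilde X)$ (e.g.\ because $\widetilde\mu_g=\widetilde\mu_{g/n}^{\,n}$ for all $n$ by uniqueness of lifts, so the induced permutation of the finitely many exceptional curves is infinitely divisible, hence trivial) and acts trivially on $H^2(\widetilde X,\QQ)=\eta^*H^2(X,\QQ)\oplus\bigoplus\QQ[E_i]$; then all graphs have the cohomology class of the diagonal, and a cycle-specialization argument kills any extra effective components of the fibers of $\Gamma\to\Ga^2$. Alternatively, avoid the graph closure altogether by constructing the resolution functorially (iterated normalized blowups of the singular locus commute with the flat projection $\Ga^2\times X\to X$ and with the automorphism $(g,x)\mapsto(g,gx)$, then contract the superfluous $(-1)$-curves equivariantly). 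The paper itself sidesteps all of this by citing the proof of \cite[Lemma~4]{DeLo10}, so your attempt is a genuinely independent route --- but as written the key regularity step is asserted rather than proved.
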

\begin{proof}
   It is enough to repeat the proof of \cite[Lemma~4]{DeLo10}.
\end{proof}

\begin{proposition}[{\cite[Propositions~5.1--2]{HaTs99}}]\label{pr:HT-minimal}
   Any smooth $\Ga^2$-surface admits an equivariant morphism to the projective plane $\PP^2$, the direct product $\PP^1\times\PP^1$, or a Hirzebruch surface $\FF_r$, where $r\ge2$.
\end{proposition}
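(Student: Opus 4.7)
The plan is to run an equivariant Minimal Model Program. Since $X$ admits an effective $\Ga^2$-action with an open orbit, that orbit is equivariantly isomorphic to $\Ga^2 \cong \AA^2$, so $X$ is rational. By the earlier lemma on curves of negative self-intersection, every $(-1)$-curve on $X$ is $\Ga^2$-stable. Hence, if $X$ is not minimal, Castelnuovo's criterion produces a smooth projective surface $X'$ together with a $\Ga^2$-equivariant contraction $\pi \colon X \to X'$ collapsing such a curve $C$, and the $\Ga^2$-action descends uniquely along $\pi$.

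Next I would verify that the induced action on $X'$ is again effective and has an open orbit. The open orbit $U \subset X$ is disjoint from $C$, since $C$ is a proper $\Ga^2$-stable closed subset and the orbits forming it have dimension at most one. Thus $\pi|_U$ is an isomorphism onto an open $\Ga^2$-orbit of $X'$, on which $\Ga^2$ acts faithfully. Iterating, the procedure terminates (by the strict decrease of the Picard number at each step) at a smooth minimal projective rational $\Ga^2$-surface $X_{\min}$, together with the desired equivariant birational morphism $X \to X_{\min}$.

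To finish, I would invoke the Enriques classification of smooth minimal projective rational surfaces: they are exactly $\PP^2$ and $\FF_r$ with $r = 0$ or $r \ge 2$, where $\FF_0 = \PP^1 \times \PP^1$, and $\FF_1$ is excluded because its $(-1)$-section would be a further contractible $(-1)$-curve. Consequently $X_{\min}$ is one of the three surfaces listed in the statement.

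The main delicate point is the descent of the $\Ga^2$-action through each blowdown together with the preservation of an open orbit and of effectiveness; once this is in place, the argument reduces to a standard application of Castelnuovo's theorem and the Enriques classification, so no serious obstacle remains beyond those bookkeeping checks.
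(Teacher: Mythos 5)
Your argument is correct and is essentially the proof of \cite[Propositions~5.1--2]{HaTs99}, which the paper simply cites rather than reproving: one runs an equivariant MMP, using that every $(-1)$-curve lies in the boundary and is $\Ga^2$-stable, and then invokes the classification of minimal rational surfaces (with $\FF_1$ excluded as non-minimal). The only step worth making explicit is the descent of the action through each contraction, which follows from Blanchard's lemma for connected group actions, or directly by extending the induced automorphisms of $X'\setminus\pi(C)\cong X\setminus C$ across the point $\pi(C)$ by normality of the smooth surface $X'$.
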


\begin{notation}\label{nt:Fr}
We denote by $\FF_0$ the direct product $\PP^1\times\PP^1$ and by $\FF_1$ the blowup of $\PP^2$ in one point.
For a $\GG_a^2$-action on $\FF_r$, $r\ge0$, 
the complement of the open orbit consists of a zero section $s$ with self-intersection number $(-r)$, and a distinguished fiber $f$ with self-intersection number $0$, see \cite{HaTs99}.
\end{notation}

\begin{corollary}\label{cr:Fr-model}
Any smooth $\Ga^2$-surface distinct from $\PP^2$ admits an equivariant morphism to $\FF_r$ for some $r\ge0$.
\end{corollary}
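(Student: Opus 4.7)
The plan is to invoke Proposition~\ref{pr:HT-minimal} and handle only the case where the equivariant target is $\PP^2$; the cases $\FF_0$ and $\FF_r$ with $r\ge 2$ already give what is required. So assume $X\ne\PP^2$ and that the equivariant morphism given by Proposition~\ref{pr:HT-minimal} is a map $\phi\colon X\to\PP^2$. Since both $X$ and $\PP^2$ carry open $\Ga^2$-orbits and $\phi$ is equivariant, $\phi$ sends the open orbit of $X$ to the open orbit of $\PP^2$ and restricts to an isomorphism of these orbits; thus $\phi$ is birational.

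Next I would use the standard structure of birational morphisms between smooth projective surfaces: $\phi$ factors as a finite sequence of blowdowns of $(-1)$-curves. Each such $(-1)$-curve has negative self-intersection and is therefore $\Ga^2$-stable by the first Lemma of Section~\ref{sec:prelim-models}, so every intermediate surface in the factorization inherits a $\Ga^2$-action, and every blowdown is equivariant. Taking the last step in this factorization, I obtain an intermediate smooth equivariant surface $X_1$ together with an equivariant morphism $X\to X_1$ and a single equivariant blowdown $\sigma\colon X_1\to\PP^2$.

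The center of $\sigma$ is a closed $\Ga^2$-orbit on $\PP^2$, hence a $\Ga^2$-fixed point. The additive action on $\PP^2$ has a unique fixed point, so $X_1$ is the blowup of $\PP^2$ at that fixed point, i.e.\ $X_1=\FF_1$ by Notation~\ref{nt:Fr}. Composing gives the desired equivariant morphism $X\to\FF_1$, completing the case $r=1$ not covered by Proposition~\ref{pr:HT-minimal}.

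The only nontrivial point is checking that an equivariant blowdown $\sigma\colon X_1\to\PP^2$ must be centered at the unique $\Ga^2$-fixed point, which is immediate from equivariance of $\sigma$ together with the uniqueness of the fixed point of the additive action on $\PP^2$. Everything else is a routine application of the factorization of birational morphisms between smooth surfaces and the stability of negative curves under the $\Ga^2$-action.
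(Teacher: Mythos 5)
Your argument is correct and takes essentially the same route as the paper, whose one-line proof simply asserts that an equivariant morphism to $\PP^2$ factors through one to $\FF_1$ — exactly what you verify by factoring the birational morphism into equivariant blowdowns of $\Ga^2$-stable $(-1)$-curves and identifying the last intermediate surface with $\FF_1$. One small slip: an additive action on $\PP^2$ need not have a unique fixed point (the translation action fixes the boundary line pointwise), but this is harmless, since you only need the center of the last blowdown to be \emph{some} fixed point, and the blowup of $\PP^2$ at any point is $\FF_1$.
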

\begin{proof}
   It is enough to check that an equivariant morphism to $\PP^2$ is factored through one to $\FF_1$. 
   Then the assertion follows from Proposition~\ref{pr:HT-minimal}.
\end{proof}

In Sections~\ref{sec:local-coords} and~\ref{sec:blowup-models} we will work in the bubble space of equivariant blowups of $\FF_r$,
e.g., see \cite[Section~7]{DolgCAG} or \cite[Section~3]{Pe25}.

\begin{definition}
   Consider the category $\mathcal{B}_r$ of equivariant iterative blowups $\pi_X\colon X\to\FF_r$ of smooth $\Ga^2$-surfaces with morphisms over $\FF_r$ endowed with a $\Ga^2$-action.
   The set of morphisms between two surfaces $X,X'$ consists of at most one element, namely $\pi_{X'}^{-1}\circ \pi_X$.
   Denote
$$ 
\FF_r^{bb} = \sqcup_{X\in\mathcal{B}_r} X /\{p_1\sim p_2\},
$$
where $p_1\sim p_2$ if there exist neighbourhoods
$p_i\in U_i\subset X_i$ such that
$\pi_{X_2}^{-1}\circ\pi_{X_1}:U_1\cong U_2$, see \cite[Definition~7.3.1]{DolgCAG}. 
We say that a point $q\in\FF_r^{bb}$ is \emph{infinitesimally near} (or shortly \emph{near})  $p\in\FF_r^{bb}$ if $q\not\sim p$ and $\sigma(q)=p$ for some morphism $\sigma$ over $\FF_r$. 
We also say that $p\in \FF_r^{bb}$ is at the \emph{boundary} if $p$ does not belong to the open orbit.

The \emph{blowup} of points $p_1,\ldots,p_k\in\FF_r^{bb}$ is the smallest surface $X \in \mathcal{B}_r$ that does not contain $p_1,\ldots,p_k$.  
That is, for any surface $X'$ not containing $p_1,\ldots,p_k$ we have the morphism $X'\to X$ over $\FF_r$.
\end{definition}



\section{Additive actions in local coordinates}\label{sec:local-coords}
In this section we introduce local coordinates at infinitesimally near points at the boundary of the open orbit.

\begin{notation}\label{nt:toric-charts}
      Let us consider the toric charts of the $\FF_r$ as in \cite[Example~3.1.16]{CoLiSh11}.
      They are isomorphic to the affine plane.
      Consider the torus-fixed points $p_2:=s\cap f$, $p_1\in f\setminus\{p_2\}$, and $p_0\in \FF_r\setminus(s\cup f)$, see Notation~\ref{nt:Fr}.

      We introduce the following local coordinates at the toric charts with indicated points, which are compatible with the toric structure.
      \begin{itemize}
         \item The local coordinates $(x,y)$ at $p_0$, the $x$-axis (resp. $y$-axis) intersecting the zero section $s$ (resp. $f$) at infinity. 
         This toric chart is the open orbit of both $\Ga^2$-actions on $\FF_r$ considered below.
         \item The local coordinates $(u,v):=\left(\frac{x}{y^r},\frac{1}{y}\right)$ at $p_1$.
         The $u$-axis is the fiber $f$ and the $v$-axis lies in the open orbit.
         \item The local coordinates $(u,v):=\left(\frac{y^r}{x},\frac{1}{y}\right)$ at $p_2$.
         The $u$-axis is the fiber $f$ and the $v$-axis is the section $s$.
      \end{itemize}
\end{notation}


\begin{notation}\label{nt:coord-seq}
   We identify each point $p\in\FF_r^{bb}$, whose image in $\FF_r$ belongs to $f$, by a \emph{coordinate sequence} $[a_1,\ldots,a_k]$, where $a_i\in\K\cup\{\infty\}$. 
   We also endow each point with local coordinates.
   \begin{enumerate}[(i)]
         \item We let $[q]$, where $q\in\K$, denote the point $(q,0)\in f\setminus \{p_2\}$ in the local coordinates $(u,v)$ of $p_1$.
         In particular, $p_1$ is denoted by $[0]$.
         We endow $[q]$ with local coordinates $(u-q,v)$.
         \item We let $[\infty,\infty]$ denote the point $p_2$, the local coordinates being introduced above.
         \item Let $E$ be the exceptional curve of the blowup of a point $p$ with a coordinate sequence $[a_1,\ldots,a_k]$ and local coordinates $(u,v)$.
            Then we introduce a rational coordinate $u/v$ on $E$, so that its intersection with the preimage of the $v$-axis has coordinate $0$, and the one with the $u$-axis has coordinate $\infty$.
            We identify a point on $E$ with coordinate $q$ with a coordinate sequence $[a_1,\ldots,a_k,q]$.
            We endow the points on $E$ with local coordinates as follows.
            \begin{itemize}
               \item If $q=0$, then the local coordinates are $(u',v'):=(u/v,v)$, with $u'$-axis $\{v'=0\}$ being the exceptional curve $E$ and the $v'$-axis $\{u'=0\}$ being the preimage of the $v$-axis.
               \item If $q\neq0,\infty$, then the local coordinates are $(u/v-q,v)$, which differ from the case of $q=0$ by a shift.
               \item If $q=\infty$, then the local coordinates are $(u',v'):=(u,v/u)$, with $u'$-axis being the preimage of the $u$-axis and the $v'$-axis being $E$.
            \end{itemize}
         \item We shorten the coordinate sequence $[\ldots,\underbrace{a,\ldots,a}_{k},\ldots]$ to $[\ldots,a_{(k)},\ldots]$.
\end{enumerate}
\end{notation}

\begin{remark}
       A point $[\ldots,\infty]$ is always a node of the boundary divisor, 
      and the convention $[\infty,\infty]$ for $p_2$ comes from the blowup of the singular point of a weighted projective plane.
      Thus, coordinate sequences $[\infty,q]$ with $q\neq\infty$ are reserved to points of $s$ but never used.
\end{remark}

\begin{notation}\label{nt:curve-coord-seq}
 Consider an infinitesimally near point $p=[a_1,\ldots,a_k]$ at $\FF_r$. 
   Then we denote the exceptional curve $E$ of the blowup of $p$ by the same coordinate sequence. By abuse of notation, we denote $f$ by $[\,]$ and $s$ by $[\infty]$.
   
   Moreover, we introduce the local coordinates at the curve corresponding to the sequence $[a_1,\ldots,a_k]$ as ones of the point $[a_1,\ldots,a_k,0]$.
   In particular, $E$ is always the $u$-axis $\{v=0\}$ in its local coordinates.
\end{notation}


   \begin{definition}
         We say that the point $p\in\FF_r^{bb}$ is \emph{fixed} if the induced $\Ga^2$-action on $\FF_r^{bb}$ fixes $p$.
   We say that a curve at the boundary of $X\in\mathcal{B}_r$ is \emph{fixed} if it is pointwise fixed.
   \end{definition}

   \begin{remark}
      \begin{enumerate}[(i)]
         \item If the point $p=[a_1,\ldots,a_k]$ is fixed, then the vector fields of the considered $\Ga^2$-action are regular in the local coordinates of the exceptional curve at~$p$.
         \item 
   Assume that the point $p=[a_1,\ldots,a_k]$ is fixed and $a_{k+2},\ldots,a_n\in\{0,\infty\}$ for some $k,n\in\NN$.
   Then the sequence $$[a_1,\ldots,a_k,\infty,a_{k+2},\ldots,a_n]$$ 
   corresponds to a fixed point, since all blowups near $p$ occur at nodes of the boundary divisor. 
   Moreover, the exceptional curve at $p$ is also fixed.
      \end{enumerate}
\end{remark}

By \cite[Proposition~3.2]{HaTs99} and \cite[Proposition~5.5]{HaTs99}, there are two non-isomorphic actions 
on $\FF_r$ for $r\ge1$, one of them acting non-trivially on the distinguished fiber $f$, and there only one action on $\PP^1\times\PP^1$.
We recall them in  Remark~\ref{rm:xy} below.

\begin{definition}
We recall that two actions $\sigma_1,\sigma_2$ of an algebraic group $G$ on a variety $X$ are \emph{isomorphic} if 
there exist automorphisms $\phi$ of $X$ and $\psi$ of $G$ as an algebraic group such that the following diagram is commutative, e.g., see. \cite[Definition~1.9]{ArZa22}.
\begin{center}
\begin{tikzcd}
G \times X \arrow[r, "\sigma_1"] \arrow[d,"{(\psi,\phi)}"] & X \arrow[d, "\phi"] \\
G \times X \arrow[r, "\sigma_2"] & X
\end{tikzcd}   
\end{center}
\end{definition}

   \begin{remark}      \label{rm:xy}
   The subgroup $\Aut_f(\FF_r)$ of regular automorphisms of $\FF_r$ that preserve $f$ is given in the local coordinates $(x,y)$ at $p_0$ as follows.
\begin{equation}\label{eq:aut-f}
   \Aut_f(\FF_r)=\{(x,y)\mapsto(b_1x+a_0+a_1y+\ldots+a_ry^r,b_2y+c)\mid b_1,b_2\in\K^\times, a_i,c\in\K\}.
\end{equation}
   The two non-isomorphic $\Ga^2$-actions on $\FF_r$ are given in the coordinates $(x,y)$ by
   \begin{align}
      \phi_r\colon(a,b)\circ(x,y)=&(x+a,y+b),\label{eq:Ga2-standard}\\
      \psi_r\colon(a,b)\circ(x,y)=&\left(x+a +(y+b)^{r+1}-y^{r+1},y+b\right).
   \end{align}
   Both actions admit a 1-dimensional orbit $s\setminus\{p\}$.
   The action $\phi_r$ fixes $f$ pointwise for $r>0$ and $\psi_r$ acts on $f$ with a 1-dimensional orbit, cf. \cite[Proposition~5.5]{HaTs99}.
   If we blow up $(\FF_r,\psi_r)$ at the fixed point and contract the strict transform of $f$, then we obtain $(\FF_{r+1},\phi_{r+1})$ up to the coordinate change $(x,y)\mapsto(x+y^{r+1},y)$.
   So, $\psi_{r}$ is obtained from $\phi_{r+1}$ by equivariant blowup and blowdown for any $r\ge0$.
   In addition, in the case of $\FF_0$ the actions $\psi_0$ and $\phi_0$ are isomorphic.
\end{remark}

\begin{remark}\label{rm:actions}
   We can compute explicitly a $\Ga^2$-action in any local coordinates provided that the blown up points are fixed, using Notation~\ref{nt:coord-seq}.
   
   For example, in the local coordinates at $[\infty,\infty]$
    the $\Ga^2$-actions are given by
   \begin{alignat}{3}
      \phi_r\colon(a,b)\circ(u,v) &=\bigg(\frac{u(1+bv)^r}{1+auv^r},& \frac{v}{1+bv}\bigg),\\
      \psi_r\colon(a,b)\circ(u,v) &=\bigg(\frac{u(1+bv)^r}{1+auv^r+\frac{u}{v}\left((1+bv)^{r+1}-1\right)},& \frac{v}{1+bv}\bigg).
   \end{alignat}
  
   The corresponding vector fields are
   \begin{alignat}{5}
      \partial_a \phi_r =& (-u^2v^r,&0),\quad
      &\partial_b \phi_r =& (ruv,-v^2),\\
      \partial_a \psi_r =& (-u^2v^r,&0),\quad
      &\partial_b \psi_r =& (-ru^2+ruv,-v^2).
   \end{alignat}
      One could notice 
      that the axes of local coordinates at $p=[\infty,\infty]$ are indeed stable,
   and $\phi_r$ (pointwise) fixes the $u$-axis, i.e., the fiber $f$.
   \end{remark}

\section{Smooth equivariant blowup models}\label{sec:blowup-models}
Here we describe equivariant blowups of $\FF_r$, whose fixed components have self-intersection number $-2$.
This will allow us to describe $\Ga^2$-surfaces with ADE-singularities and a finite number of orbits in Theorem~\ref{th:main}.

\begin{lemma}\label{lm:contract-to-Fr-phi}
   Let $X$ be a smooth $\Ga^2$-surface that is not isomorphic to $(\FF_r,\psi_r)$ for $r\ge0$ or to $\PP^2$.
   Then $X$ is obtained from $(\FF_r,\phi_r)$ for some $r\ge1$
   by a sequence of blowups of fixed points.
\end{lemma}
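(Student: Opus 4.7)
\emph{Proof plan.} The plan is to combine Corollary~\ref{cr:Fr-model} with the blowup/blowdown relation in Remark~\ref{rm:xy} in order to trade a $\psi_r$-model for a $\phi_{r+1}$-model.

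By Corollary~\ref{cr:Fr-model}, since $X\ne\PP^2$ there is an equivariant morphism $\pi\colon X\to\FF_r$ for some $r\ge0$. As a birational morphism between smooth projective surfaces, $\pi$ decomposes into a sequence of point blowups, and equivariance forces every center to be a $\Ga^2$-fixed point. By Remark~\ref{rm:xy}, the induced action on $\FF_r$ is isomorphic to either $\phi_r$ or $\psi_r$, while on $\FF_0$ these two actions coincide up to isomorphism. If the induced action is $\phi_r$ with $r\ge1$, we are done.

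It remains to treat the case when the induced action is $\psi_r$ for some $r\ge0$ (subsuming, for $r=0$, the only action on $\FF_0$). Using the local expressions of Remarks~\ref{rm:xy} and~\ref{rm:actions}, one checks that the corner $p_2=s\cap f$ is the only $\Ga^2$-fixed point of $\FF_r$: the open orbit is free, while $s\setminus\{p_2\}$ and $f\setminus\{p_2\}$ are contained in $1$-dimensional orbits. Since by hypothesis $X\not\cong(\FF_r,\psi_r)$, the morphism $\pi$ is non-trivial, hence its first (i.e.\ bottom-level) blowup must lie over the unique fixed point $p_2$. Therefore $\pi$ factors as $X\to\widetilde{\FF_r}\to\FF_r$, where $\widetilde{\FF_r}$ is the blowup of $\FF_r$ at $p_2$.

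In $\widetilde{\FF_r}$ the strict transform $\tilde f$ of the distinguished fiber is a $\Ga^2$-stable $(-1)$-curve, and Remark~\ref{rm:xy} identifies its contraction $\widetilde{\FF_r}\to\FF_{r+1}$ with the passage from $(\FF_r,\psi_r)$ to $(\FF_{r+1},\phi_{r+1})$ (up to a change of coordinates on $\FF_{r+1}$). Composing, $X$ acquires an equivariant morphism to $(\FF_{r+1},\phi_{r+1})$, and since $r+1\ge1$ this reduces us to the case already treated. The main technical point is the verification that $p_2$ is the unique $\Ga^2$-fixed point of $\psi_r$; once this is in place, the rest is a direct application of the blowup/blowdown correspondence of Remark~\ref{rm:xy}.
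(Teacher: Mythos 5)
Your argument is correct and takes essentially the same route as the paper's proof: reduce to an equivariant morphism onto $\FF_r$ via Corollary~\ref{cr:Fr-model}, and when the induced action is $\psi_r$ observe that the bottom-level blowup must occur at the unique fixed point $p_2=[\infty,\infty]$, then apply the blowup/blowdown correspondence of Remark~\ref{rm:xy} to replace the target by $(\FF_{r+1},\phi_{r+1})$. The only difference is that you spell out the verification that $p_2$ is the unique fixed point of $\psi_r$ and the factorization of the equivariant morphism into blowups of fixed points, both of which the paper leaves implicit.
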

\begin{proof}
   By Corollary~\ref{cr:Fr-model}, there is a sequence of equivariant blowups 
   $$X\stackrel{\sigma_1}{\to} X_1\stackrel{\sigma_2}{\to}\cdots\stackrel{\sigma_d}{\to} X_d=\FF_r$$ 
   for some $d,r\ge0$.
   Assume that the resulting $\Ga^2$-action on $\FF_r$ is isomorphic to $\psi_r$, 
   otherwise we have $r\ge1$ and the statement holds.
   Then $\sigma_d$ is the blowup of the only fixed point $[\infty,\infty]$.
   If so, $X_{d-1}$ admits a contraction to $\FF_{r+1}$, and the resulting action on $\FF_{r+1}$ fixes $f$, hence is isomorphic to $\phi_{r+1}$.
\end{proof}

\begin{corollary}
   Let $X$ be a smooth $\Ga^2$-surface that is not isomorphic to $\PP^1\times\PP^1$.
   Then there is an equivariant morphism of $X$ to the weighted projective plane $\PP(1,1,r)$ for some $r\ge1$.
\end{corollary}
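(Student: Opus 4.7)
The plan is to combine Lemma~\ref{lm:contract-to-Fr-phi} with the standard equivariant contraction of the negative section of a Hirzebruch surface. Specifically, I would first produce an equivariant morphism $X \to \FF_r$ with $r \ge 1$, and then compose with the contraction $\FF_r \to \PP(1,1,r)$ of the $(-r)$-section $s$.

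For the first step, I would split into cases. If $X \cong \PP^2$, then $\PP^2 \cong \PP(1,1,1)$ and the identity morphism works. If $X \cong (\FF_r, \psi_r)$ for some $r \ge 0$, then since $X \not\cong \PP^1\times\PP^1 = \FF_0$ and since $\psi_0 \cong \phi_0$ by Remark~\ref{rm:xy}, we must have $r \ge 1$, and we may take the morphism to $\FF_r$ to be the identity on $X$. In all remaining cases Lemma~\ref{lm:contract-to-Fr-phi} directly supplies an equivariant morphism $X \to (\FF_r, \phi_r)$ with $r \ge 1$.

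For the second step, the section $s \subset \FF_r$ has self-intersection $-r < 0$ for $r \ge 1$, hence is $\Ga^2$-stable by the first lemma of Section~\ref{sec:prelim-models} (applied to the minimal desingularization if necessary, but $\FF_r$ is already smooth). Its contraction realizes the classical birational morphism $\FF_r \to \PP(1,1,r)$: it is an isomorphism when $r=1$, and it is the minimal resolution of the cyclic quotient singularity of $\PP(1,1,r)$ at $[0{:}0{:}1]$ when $r \ge 2$. Equivariance of this contraction follows immediately from the $\Ga^2$-stability of $s$, so composing with the morphism from the first step yields the required equivariant morphism $X \to \PP(1,1,r)$.

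The only real subtlety, rather than an obstacle, lies in the $(\FF_r, \psi_r)$ case: one must observe that the contraction of $s$ is equivariant for both $\phi_r$ and $\psi_r$, which is automatic since $s$ is $\Ga^2$-stable under either action (it is the unique curve of negative self-intersection). No delicate computation is needed, and the local coordinate description from Notation~\ref{nt:toric-charts} can be used to make the identification with $\PP(1,1,r)$ fully explicit if desired.
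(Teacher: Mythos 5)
Your proposal is correct and follows the same route as the paper: invoke Lemma~\ref{lm:contract-to-Fr-phi} and then contract the $\Ga^2$-stable negative section $s\subset\FF_r$ to obtain $\PP(1,1,r)$. The paper's proof is a one-liner that leaves the excluded cases ($\PP^2\cong\PP(1,1,1)$ and $(\FF_r,\psi_r)$ with $r\ge1$) implicit, whereas you spell them out; this is a harmless elaboration, not a different argument.
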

\begin{proof}
   The assertion follows from Lemma~\ref{lm:contract-to-Fr-phi}, since contracting the zero section of a Hirzebruch surface yields a weighted projective plane.
\end{proof}

The computations in the following lemma are assisted by SageMath \cite{Sage}.

\begin{lemma}\label{lm:fixed-curves}
   Consider the $\Ga^2$-surface $(\FF_r,\phi_r)$, where $r\ge1$.
   Then we have the following:  
   \begin{enumerate}[(i)]
      \item The exceptional component at $[q]$, where $q\neq\infty$, is fixed if and only if $q=0$ and $r>1$.
      \item The exceptional component $E$ at $p=[0_{(k)},q]$, where $q\neq\infty$ and $k<r-1$, is fixed if and only if $q=0$.
      \item The exceptional component $E$ at $p=[0_{(r-1)},q]$, where $q\neq\infty$,  is never fixed.   
      \item The exceptional component at $[q,\infty,w]$, where $q\neq\infty$ and $w\neq0,\infty$, is fixed if and only if $q=0$ and $r>1$.
      \item Assuming that $r>1$, the exceptional component at $[0,\infty,w,q]$, where $q\neq\infty$ and $w\neq0,\infty$, is never fixed.
      \item The exceptional component at $[\infty,\infty,0_{(k)},w]$, where $w\neq0,\infty$ and $k\ge0$, is never fixed.
      \item The exceptional component at $[\infty,\infty,\infty,w]$, where $w\neq0,\infty$, is always fixed.
      \item The exceptional component at $[\infty,\infty,\infty,w,q]$, where $w\neq0,\infty$ and $q\neq\infty$, is never fixed.
   \end{enumerate}
\end{lemma}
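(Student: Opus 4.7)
My plan is to turn each case into the same local computation. For the exceptional curve $E$ of the blowup of a point $p = [a_1,\ldots,a_k]$, take the local coordinates $(u,v)$ at $[a_1,\ldots,a_k,0]$ given by Notation~\ref{nt:coord-seq}(iii), in which $E = \{v=0\}$. Since $E$ is a $\Ga^2$-stable divisor of the boundary, the vector fields $\partial_a\phi_r$ and $\partial_b\phi_r$ are tangent to $E$, so the $\partial_v$-component of each is divisible by $v$. Hence ``$E$ is fixed'' reduces to the $\partial_u$-components of both fields vanishing identically on $\{v=0\}$.

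The engine is a uniform pushforward rule for the coordinate changes of Notation~\ref{nt:coord-seq}(iii). If $A\partial_u + B\partial_v$ is a vector field in the parent chart and we pass to the ``$q=0$''-chart via $(u,v) = (u'v', v')$, then after rewriting $A, B$ in $(u', v')$ it becomes $\tfrac{A - Bu'}{v'}\partial_{u'} + B\partial_{v'}$; the shift chart and the ``$q=\infty$'' chart are analogous. Starting from the vector fields at $p_1$ (a short calculation from $\phi_r(a,b)(x,y)=(x+a, y+b)$ and $(u,v)=(x/y^r, 1/y)$ gives $\partial_a\phi_r = v^r\partial_u$ and $\partial_b\phi_r = -ruv\,\partial_u - v^2\partial_v$) and at $p_2$ (given in Remark~\ref{rm:actions}), one pushforward per entry of the coordinate sequence of $p$ produces the vector fields in the chart of $E$.

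I would then attack the eight statements in three blocks. For (i)--(iii), iterating the ``$q=0$'' pushforward above $p_1$ yields by induction $\partial_a\phi_r = v^{r-k}\partial_u$ and $\partial_b\phi_r = (k-r)uv\,\partial_u - v^2\partial_v$ in the chart at $[0_{(k+1)}]$; one further shift by $q$ and blowup then gives $\partial_u$-coefficients $v^{r-k-1}$ and $(k-r)q + (k-r+1)uv$, which vanish simultaneously on $\{v=0\}$ iff $r-k-1 \ge 1$ and $q = 0$, exactly yielding claims (i)--(iii). Cases (iv), (v) are the analogous computation after first passing through the ``$q=\infty$'' chart to reach $[q,\infty]$, and (vi)--(viii) repeat the same pattern anchored at $p_2$ using its vector fields from Remark~\ref{rm:actions}.

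The main obstacle is bookkeeping: after several pushforwards the coefficients acquire many powers of $u$ and $v$, and one must track the $v$-leading order carefully. The saving grace is that every blowup centre along the coordinate sequence is itself a fixed point (we only blow up on fixed curves or at fixed nodes of the boundary), so the vector fields remain regular on $E$ and only the lowest-order $v$-coefficient decides fixedness. Each case thereby reduces to a polynomial identity in the parameters $r, k, q, w$, which can be verified by hand or, as the paper notes, with the help of SageMath.
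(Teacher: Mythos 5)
Your proposal follows essentially the same route as the paper: express the vector fields of $\phi_r$ in the local coordinates of Notation~\ref{nt:coord-seq} at the relevant exceptional curve $E=\{v=0\}$ and test whether their restrictions to $E$ vanish, and your explicit formulas for (i)--(iii) (e.g.\ $\partial_a\phi_r=v^{r-k-1}\partial_u$ and the $\partial_u$-coefficient $(k-r)q+(k-r+1)uv$ at $[0_{(k)},q,0]$) agree with the paper's after the substitution $t=r-1-k$. The remaining cases (iv)--(viii) are only sketched as ``analogous'', but the pushforward machinery you set up is exactly what the paper's SageMath-assisted computations implement, so the argument is correct and not a different method.
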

\begin{proof}
   (ii--iii) Let us denote $t:=r-1-k$ for some non-negative $k\le r-1$. 
   By induction by $k$ we deduce that the action vector fields in the local coordinates of the point $[0_{(k)},q,0]$ are as follows,
   \[
      \partial_a \phi_r =  (v^t, 0),\quad
      \partial_b \phi_r =(-tuv-(t+1)q, -v^2).
   \]
   We see that $\partial_a \phi_r$ is zero on $E$ if and only if $t>0$, and $\partial_b \phi_r$ is zero on $E$ if and only if $q=0$.
   Then the claims follow, and (i) is their special case $k=0$.

   (iv) Let $W=uv+w$ and $Q=Wv+q$. Then the action vector fields in the local coordinates of the point $[q,\infty,w,0]$ are as follows,
   \begin{alignat*}{4}
      \partial_a \phi_r =&  (W^{r-1}v^{2r-2}(uv+2W),& -W^{r-1}v^{2r}),&\\
      \partial_b \phi_r =&(Wuv^2+W^2v-rQ(uv+2W),& -Wv^3+rQv^2).&
   \end{alignat*}
   Then $Q=q$ and $W=w$ at $\{v=0\}$, hence $\partial_b \phi_r|_{v=0}$ equals $(-2rqw,0)$ and $\partial_a \phi_r|_{v=0}$ equals $(2w,0)$ if $r=1$ and zero otherwise.
   The assertion follows.  

   (v) Analogously, direct computations show that 
   in the local coordinates of the point $[0,\infty,w,q,0]$ we have
    $\partial_a \phi_r|_{v=0}=0$ and $\partial_b \phi_r|_{v=0}=((1-2r)w^2,0)$, which is never zero.


   (vi) The action vector fields in the local coordinates of the point $[\infty,\infty,0_{(k)},w]$ are as follows,
   \[
      \partial_a \phi_r =  (-(uv+w)^2v^{r+k}, 0),\quad
      \partial_b \phi_r =((r+k+2)uv+(r+k+1)w,-v^2).
   \]
   We see that $\partial_b \phi_r|_{v=0}$ is never zero.

   (vii--viii)  Let $Q=uv+q$ and $W=Qv+w$. Then the action vector fields in the local coordinates of the point $[\infty,\infty,\infty,w,q,0]$ are as follows,
   \begin{alignat*}{4}
      \partial_a \phi_r =&( -W^{r+1}v^{2r-1}(-uv^2-Qv+2W) ,& W^{r+1}v^{2r+2} ),&\\
      \partial_b \phi_r =&( (r+1)Wv(uv+Q)+(2r+1)W^2 ,& -(r+1)Wv^3 ).&
   \end{alignat*}
   Since they are regular, the point $[\infty,\infty,\infty,w]$ is fixed.
   Finally,  $\partial_a \phi_r|_{v=0}$ is zero and $\partial_b \phi_r|_{v=0}$ equals $((2r+1)w^2,0)$, which is never zero.
\end{proof}

\begin{proposition}\label{pr:blowups}
   Let $X$ be a smooth $\Ga^2$-surface that is obtained by equivariant blowups of fixed points from $(\FF_r,\phi_r)$ for some $r\ge1$.
   Assume that every fixed irreducible curve at the boundary of $X$ is of self-intersection number $-2$.
   Then one of the following is true:  
   \begin{enumerate}[(i)]
      \item $X$ is the blowup of a pair of points at the boundary $p_1,p_2$, whose images in $\FF_r$ do not coincide, and which are of the following form:
      \begin{itemize}
         \item either $[0_{(k)},q]$, where $q\neq0,\infty$ and $k\le r-1$;
         \item or $[0_{(r)}]$;
         \item or $[\infty,\infty,0_{(k)},w]$, where $w\neq0,\infty$ and $k\ge0$.
      \end{itemize}
      \item $X$ is the blowup of one of the following points:
      \begin{enumerate}[(a)]
         \item $[0,\infty,w,q]$, where $w\neq0,\infty$ and $q\neq\infty$, assuming that $r>1$;
         \item $[\infty,\infty,\infty,w,q]$, where $w\neq0,\infty$ and $q\neq\infty$;
         \item  $[q,\infty,w]$, where $q,w\neq0,\infty$;
         \item $[0,\infty,w]$, where $w\neq0,\infty$, assuming that $r=1$.
      \end{enumerate}
   \end{enumerate}
   The corresponding  dual graphs are depicted in Fig.~\ref{fig:blowups}.
\end{proposition}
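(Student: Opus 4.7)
The plan is to enumerate all blowup sequences from $(\FF_r,\phi_r)$ with $r \geq 1$ consistent with the hypothesis. The starting observation is that in $\FF_r$ only the fiber $f$ is a fixed boundary curve, with self-intersection $0$; every fixed boundary curve of $X$ must have self-intersection $-2$. Hence \emph{exactly} two blowups in the tower $X \to \FF_r$ occur at points of the successive proper transforms of $f$, and any freshly produced exceptional curve which is fixed---as determined by Lemma~\ref{lm:fixed-curves} or an analogous direct computation---must receive exactly one further blowup, at a point lying on no other fixed boundary curve already at self-intersection $-2$.

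With this in hand, I split the argument according to the locations of the two $\tilde f$-blowups. In the first case both happen at distinct points of $f \subset \FF_r$, producing two independent chains. A chain rooted at $[q]$ with $q \neq 0,\infty$ terminates immediately, since Lemma~\ref{lm:fixed-curves}(i) gives a non-fixed exceptional; this is the form $[0_{(0)},q]$. A chain rooted at $[0]$ (possible only when $r > 1$) creates a fixed exceptional, and iterating parts~(ii)--(iii) of the lemma along $[0_{(k)}]$ forces the chain to terminate either at $[0_{(k)},q]$ with $q \neq 0,\infty$ and $k \leq r-1$, or at $[0_{(r)}]$. A chain rooted at $p_2 = [\infty,\infty]$ encounters the fixed curve $E_{[\infty,\infty]}$ (pointwise fixed by a direct calculation as in Remark~\ref{rm:actions}), and iterating part~(vi) of the lemma forces the termination $[\infty,\infty,0_{(k)},w]$ with $w \neq 0,\infty$. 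These three families are exactly the three bullet points of case~(i); the hypothesis on distinct images in $\FF_r$ ensures the two chains are genuinely independent.

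In the second case the two $\tilde f$-blowups lie in a common chain: after blowing up $[q] \in f$, the proper transform of $f$ meets the new exceptional at the node $[q,\infty]$ (or at $[\infty,\infty,\infty]$ if $[q] = p_2$), and we blow up this node next. A direct computation shows the exceptional at this node is itself pointwise fixed, so one more blowup on it is required. Parts~(iv)--(v) of Lemma~\ref{lm:fixed-curves} now pin down the termination of the $[q,\infty]$-chain: for $q \neq 0,\infty$ it already ends at $[q,\infty,w]$ with $w \neq 0,\infty$, giving~(c); for $q = 0$ with $r = 1$ it similarly ends at $[0,\infty,w]$, giving~(d); while for $q = 0$ with $r > 1$ part~(iv) shows $E_{[0,\infty,w]}$ is itself fixed, forcing one further blowup which by part~(v) terminates at $[0,\infty,w,q]$ with $q \neq \infty$, giving~(a). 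An analogous analysis at $p_2$, invoking parts~(vii)--(viii) of the lemma, produces~(b), namely $[\infty,\infty,\infty,w,q]$.

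The main obstacle is the careful bookkeeping along each chain: at every new layer one must identify which boundary curves through the next blowup point are fixed, which of them are already at self-intersection $-2$ and therefore off-limits, and which points of the freshly created exceptional are consequently permissible. Concretely, this amounts to verifying that each generic coordinate value $w \neq 0,\infty$ or $q \neq \infty$ indeed misses every prior fixed boundary curve, while the excluded values correspond precisely to the nodes with such curves and would drop one of them below $-2$. Once each chain is certified in this way, the dual graphs of Figure~\ref{fig:blowups} follow by reading off the resulting boundary intersection pattern.
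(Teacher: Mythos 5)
Your overall strategy coincides with the paper's: exactly two blowups land on successive strict transforms of $f$, one splits into the case of two distinct base points on $f$ versus a single chain over one point, and Lemma~\ref{lm:fixed-curves} decides where each chain terminates. Your case~(i) and the sub-cases leading to (ii.a) and (ii.b) (where $E_1$ is fixed, so both nodes of $E_2$ lie on fixed $(-2)$-curves and are off-limits) are argued as in the paper.

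There is, however, a gap in the single-chain case when the first exceptional curve $E_1=E_{[q]}$ is \emph{not} fixed, i.e.\ the cases leading to (ii.c) and (ii.d). Your own permissibility rule — the one further blowup on a freshly created fixed curve must avoid fixed boundary curves already at self-intersection $-2$ — does not forbid blowing up a node with a \emph{non-fixed} curve. After blowing up $[q]$ and $[q,\infty]$, the pointwise-fixed $E_2=E_{[q,\infty]}$ meets $\tilde f$ (fixed, $-2$) and $\tilde E_1$ (non-fixed, $-2$); the node $[q,\infty,0]=E_2\cap\tilde E_1$ is therefore a permissible fixed point. Blowing it up drops the non-fixed $\tilde E_1$ to $-3$ (harmless under the hypothesis) and creates a new pointwise-fixed $(-1)$-curve $E_3$, which needs one more blowup, and so on; terminating such a chain at a generic point yields $Bl(\FF_r,[q,\infty,0_{(j)},w])$ with $j\ge1$ in which every fixed curve does end at $-2$ (a direct check for $r=1$, $q=w=1$, $j=1$ shows the last exceptional is non-fixed, so the chain genuinely closes up). These configurations are absent from your list and you never exclude them — note that you \emph{do} carry out exactly this kind of iteration along the non-fixed section $s$ in case~(i) (the chains $[\infty,\infty,0_{(k)},w]$), so the omission is an inconsistency in your own bookkeeping. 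The paper's proof is equally terse here (it simply asserts the blowup on $E_2$ occurs at a non-intersection point), so this is the step a complete argument must address: either rule these extra chains out, or identify them — after re-contracting to a different Hirzebruch surface — with blowups already on the list (the dual graph of $Bl(\FF_1,[1,\infty,0,1])$ matches that of $Bl(\FF_3,[0_{(2)},q_1],[q_2])$ from case~(i), which is presumably the intended resolution).
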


\begin{figure}
   \centering
\subcaptionbox{Case (i) without blowing up $[\infty,\infty]$}
{\begin{tikzpicture}[scale=1.5, 
    every node/.style={circle, draw=black, fill=gray, inner sep=2pt, font=\tiny},
    every label/.style={rectangle, draw=white, fill=white, inner sep=2pt, font=\tiny},
     thick]

\node[rectangle, label=above:{$[0_{(k)},q_1]$}, label=below:{$-1$}] (2) at (0,0) {};
\node[circle, label=above:{$[0_{(k)}]$}, label=below:{$-2$}] (3) at (1,0) {};
\node[circle, label=above:{$[0]$}, label=below:{$-2$}] (4) at (2,0) {};
\node[label=above:{$[\,]$}, label=below right:{$-2$}] (5) at (3,0) {};
\node[rectangle,label=above:{$[\infty]$}, label=below:{$-r$}] (r) at (4,0) {}; 
\node[rectangle,label=left:{$[q_2]$}, label=right:{$-1$}] (6) at (3,-1) {};

\draw (2) -- (3) edge[dotted] (4);
\draw (4) -- (5) -- (r);
\draw (5) -- (6) ;

\end{tikzpicture}}
\subcaptionbox{Case (i) with blowing up $[\infty,\infty]$}
{\begin{tikzpicture}[scale=1.5, 
    every node/.style={circle, draw=black, fill=gray, inner sep=2pt, font=\tiny},
    every label/.style={rectangle, draw=white, fill=white, inner sep=2pt, font=\tiny},
     thick]

\node[rectangle, label=above:{$[0_{(k_1)},q]$}, label=below:{$-1$}] (2) at (0,0) {};
\node[circle, label=above:{$[0_{(k_1)}]$}, label=below:{$-2$}] (3) at (1,0) {};
\node[circle, label=above:{$[0]$}, label=below:{$-2$}] (4) at (2,0) {};
\node[label=above:{$[\,]$}, label=below:{$-2$}] (5) at (3,0) {};
\node[label=above:{$[\infty_{(2)}]$}, label=below:{$-2$}] (6) at (4,0) {};
\node[label=above:{$[\infty_{(2)},0]$}, label=below:{$-2$}] (7) at (5,0) {};
\node[label=above:{$[\infty_{(2)},0_{(k_2)}]$}, label=below left:{$-2$}] (8) at (6,0) {};
\node[rectangle,label=above:{$[\infty]$}, label=below:{$-r-k_2-1$}] (r) at (7,0) {}; 
\node[rectangle,label=left:{$[\infty_{(2)},0_{(k_2)},w]$}, label=right:{$-1$}] (0) at (6,-1) {};

\draw (2) -- (3) edge[dotted] (4);
\draw (4) -- (5) -- (6) -- (7);
\draw (7) edge[dotted] (8);
\draw (0) -- (8) -- (r);

\end{tikzpicture}}
\subcaptionbox{Case (ii.a), where $r>1$}{\begin{tikzpicture}[scale=1.5, 
    every node/.style={circle, draw=black, fill=gray, inner sep=2pt, font=\tiny},
    every label/.style={rectangle, draw=white, fill=white, inner sep=2pt, font=\tiny},
     thick]

\node[rectangle, label=above:{$[0,\infty,w,q]$}, label=below:{$-1$}] (0) at (0,0) {};
\node[circle, label=above:{$[0,\infty,w]$}, label=below:{$-2$}] (1) at (1,0) {};
\node[circle, label=above:{$[0,\infty]$}, label=below right:{$-2$}] (2) at (2,0) {};
\node[label=above:{$[\,]$}, label=below:{$-2$}] (3) at (3,0) {};
\node[rectangle,label=above:{$[\infty]$}, label=below:{$-r$}] (r) at (4,0) {}; 
\node[label=left:{$[0]$}, label=right:{$-2$}] (d) at (2,-1) {};

\draw (0) -- (1) -- (2) -- (3) -- (r) ;
\draw (d) -- (2);

\end{tikzpicture}}
\subcaptionbox{Case (ii.b)}{\begin{tikzpicture}[scale=1.5, 
    every node/.style={circle, draw=black, fill=gray, inner sep=2pt, font=\tiny},
    every label/.style={rectangle, draw=white, fill=white, inner sep=2pt, font=\tiny},
     thick]

\node[rectangle, label=above:{$[\infty_{(3)},w,q]$}, label=below:{$-1$}] (0) at (0,0) {};
\node[circle, label=above:{$[\infty_{(3)},w]$}, label=below:{$-2$}] (1) at (1,0) {};
\node[circle, label=above:{$[\infty_{(3)}]$}, label=below right:{$-2$}] (2) at (2,0) {};
\node[label=above:{$[\infty_{(2)}]$}, label=below:{$-2$}] (3) at (3,0) {};
\node[rectangle,label=above:{$[\infty]$}, label=below:{$-r-1$}] (r) at (4,0) {}; 
\node[label=left:{$[\,]$}, label=right:{$-2$}] (d) at (2,-1) {};

\draw (0) -- (1) -- (2) -- (3) -- (r) ;
\draw (d) -- (2);

\end{tikzpicture}}
\subcaptionbox{Case (ii.c)}{\begin{tikzpicture}[scale=1.5, 
    every node/.style={circle, draw=black, fill=gray, inner sep=2pt, font=\tiny},
    every label/.style={rectangle, draw=white, fill=white, inner sep=2pt, font=\tiny},
     thick]

\node[rectangle, label=above:{$[q,\infty,w]$}, label=below:{$-1$}] (1) at (1,0) {};
\node[circle, label=above:{$[q,\infty]$}, label=below right:{$-2$}] (2) at (2,0) {};
\node[label=above:{$[\,]$}, label=below:{$-2$}] (3) at (3,0) {};
\node[rectangle,label=above:{$[\infty]$}, label=below:{$-r$}] (r) at (4,0) {}; 
\node[label=left:{$[q]$}, label=right:{$-2$}] (d) at (2,-1) {};

\draw (1) -- (2) -- (3) -- (r) ;
\draw (d) -- (2);

\end{tikzpicture}}
\subcaptionbox{Case (ii.d)}{\begin{tikzpicture}[scale=1.5, 
    every node/.style={circle, draw=black, fill=gray, inner sep=2pt, font=\tiny},
    every label/.style={rectangle, draw=white, fill=white, inner sep=2pt, font=\tiny},
     thick]

\node[rectangle, label=above:{$[0,\infty,w]$}, label=below:{$-1$}] (1) at (1,0) {};
\node[circle, label=above:{$[0,\infty]$}, label=below right:{$-2$}] (2) at (2,0) {};
\node[label=above:{$[\,]$}, label=below:{$-2$}] (3) at (3,0) {};
\node[rectangle,label=above:{$[\infty]$}, label=below:{$-1$}] (r) at (4,0) {}; 
\node[rectangle,label=left:{$[0]$}, label=right:{$-2$}] (d) at (2,-1) {};

\draw (1) -- (2) -- (3) -- (r) ;
\draw (d) -- (2);

\end{tikzpicture}}
   \caption{The incidence graph of the components of the open orbit complement for surfaces in Proposition~\ref{pr:blowups}. 
   Fixed components are depicted by circles and non-fixed ones by squares.
   Each component is labeled by its self-intersection number and by the corresponding coordinate sequence, see Notation~\ref{nt:curve-coord-seq}. 
   The strict transforms of $f$ and $s$ are labeled by $[\,]$ and $[\infty]$ respectively.  
   }\label{fig:blowups}
\end{figure}
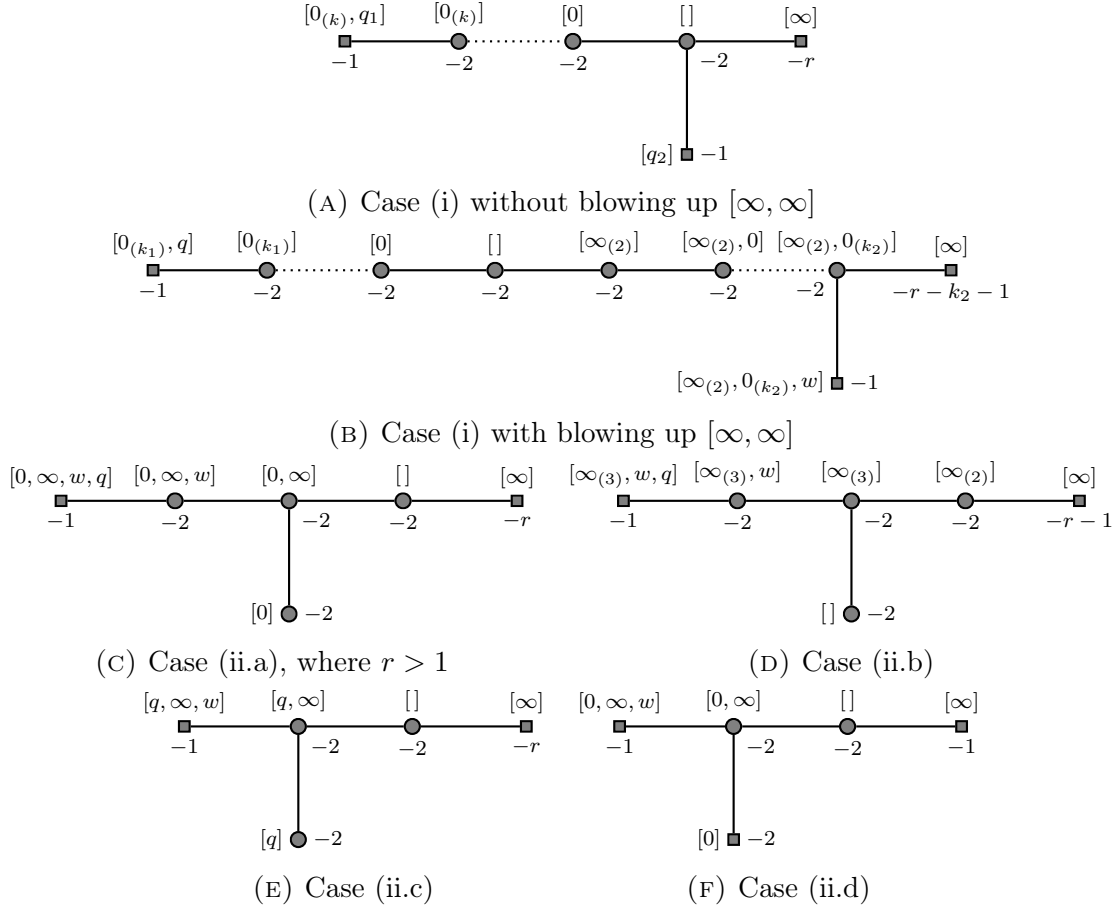

\begin{proof}
The fiber $f$ is fixed and of self-intersection number $0$.
Then its strict transform in $X$ has self-intersection number $-2$.

Assume first that $\sigma_X^{-1}\colon \FF_r\dashrightarrow X$ blows up two distinct points of $f$. 
Denote one of them by $p$
and consider the subsequence of blowups $\sigma'\colon\FF_r\dashrightarrow X'$ that occur at $p$ or near it.
Then the following cases are possible.
\begin{itemize}
   \item If the exceptional curve at $p=[q]$ is not fixed, then by Lemma~\ref{lm:fixed-curves}(iv), we have either $q\neq0,\infty$, or $q=0$ and $r=1$. 
   Notice that the blowup of a non-intersection point on a non-fixed component is not equivariant.
   Then $\sigma'$ consists of the single blowup of $[q]$.
   \item If $p=[0]$ and $r>1$, then  we are allowed to iteratively blow up  the non-intersection points of the resulting exceptional curves, starting with one at $p$, until we obtain a non-fixed curve.
   Indeed, we cannot blow up a point on a fixed $(-2)$-curve.
    By Lemma~\ref{lm:fixed-curves}(ii--iii), $\sigma'$ is the blowup of the infinitesimally near point $[0_{(k)},q]$, 
    where either $q\neq0,\infty$ and $k\le r-1$ or  $q=0$ and $k=r-1$.
   \item If $p=[\infty,\infty]$ and the exceptional curve at $p$ is $E$, then $E$ itself is fixed, but any blowup of a non-intersection point is non-fixed by Lemma~\ref{lm:fixed-curves}(vi). 
   If we continue blowing up intersection points, then we are allowed to blow up only the one lying on the strict transform of $s$, thus blowing up the point $[\infty,\infty,0_{(k)}]$. 
   By Lemma~\ref{lm:fixed-curves}(vi), the blowup at $[\infty,\infty,0_{(k)},w]$, where $w\neq0,\infty$, provides a non-fixed component, hence we stop.
\end{itemize}
This finishes case (i). 
Now assume that all blowups in the sequence of blowups of points defined by $\sigma_X^{-1}$ occur near the same point of $f$.
We may also assume that the second blowup in $\sigma_X^{-1}$ occurs at a point $p$ on the strict transform of $f$.
Let $E_1$ be the exceptional curve of the first blowup and $E_2$ be the one of the second blowup, which is at $p$.
In particular, $p$ is the intersection point of $E_1$ and the strict transform of $f$.
 
   If $E_1$ is fixed, then it is a $(-2)$-curve after the second blowup, and all subsequent blowups occur at non-intersection points of the emerging exceptional curves, starting with $E_2$ and finishing with a non-fixed curve.
   By Lemma~\ref{lm:fixed-curves}(i), $p$ equals either $[0,\infty]$ under the condition $r>1$, or $[\infty,\infty,\infty]$.
   In the former case, $\sigma_X^{-1}$ is the blowup of the point $[0,\infty,w,q]$, where $w\neq0,\infty$ and $q\neq\infty$, see Lemma~\ref{lm:fixed-curves}(iv--v).
   In the latter case,  $\sigma_X^{-1}$ is the blowup of the point $[\infty,\infty,\infty,w,q]$, where $w\neq0,\infty$ and $q\neq\infty$, see Lemma~\ref{lm:fixed-curves}(vii--viii). 
   
   We are left with the case when $E_1$ is not fixed.
   Then either $p=[q,\infty]$ for some $q\neq0,\infty$ or $p=[0,\infty]$ and $r=1$.
   In both cases the exceptional component at a non-intersection point on $E_2$ is non-fixed, see Lemma~\ref{lm:fixed-curves}(iv).
   Thus, in the former case $\sigma_X^{-1}$ is the blowup of $[q,\infty,w]$ for some $w\neq0,\infty$, and
   in the latter case $\sigma_X^{-1}$ is the blowup of $[0,\infty,w]$ for some $w\neq0,\infty$.
\end{proof}

Let us show that cases $(B,D,F)$ of Fig.\ref{fig:blowups} are redundant.

\begin{notation}
   By $Bl(\FF_r,p_1,\ldots,p_k)$ we denote a $\Ga^2$-surface obtained by the blowup of $(\FF_r,\phi_r)$ at points $p_1,\ldots,p_k\in\FF_r^{bb}$.
\end{notation}

\begin{corollary}\label{cr:cases-ACE}
   A $\Ga^2$-surface $X$ as in Proposition~\ref{pr:blowups} up to an equivariant isomorphism belongs to one of the following cases, enumerated after ones in Fig.\ref{fig:blowups}: 
   \begin{itemize}
      \item[(A)] $Bl(\FF_r,[0_{(k)},q_1],[q_2])$,
      where $k\le r-1$, $q_1,q_2\neq\infty$, $q_1\neq0$ if $k<r-1$, and 
      $q_2\neq\begin{cases}
         q_1,\text{ if } k=0,\\
         0,\text{ if } k>0;
      \end{cases}$ 
      \item[(C)] $Bl(\FF_r,,[0,\infty,w,q])$, where $r>1$, $w\neq0,\infty$, and $q\neq\infty$;
      \item[(E)] $Bl(\FF_r,[q,\infty,w])$, where $q,w\neq0,\infty$.
   \end{itemize}
   In addition, the surfaces corresponding to different cases are not isomorphic.
\end{corollary}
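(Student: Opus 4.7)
The plan is to prove the corollary in two stages. First, I will show that cases (B), (D), (F) of Fig.~\ref{fig:blowups} yield surfaces equivariantly isomorphic to surfaces in the retained cases (A), (C), (E), using the elementary transformation of Remark~\ref{rm:xy}. Second, I will distinguish the three retained cases by their dual graphs.

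For case (F), the translation $(x,y)\mapsto(x+c,y)$ lies in $\Aut_f(\FF_1)$ by \eqref{eq:aut-f} and commutes with $\phi_1$; it sends $[0]\in f$ to $[c]\in f$ for any $c\ne\infty$. Choosing $c\ne 0,\infty$ identifies the blowup of $[0,\infty,w]$ with the blowup of $[c,\infty,w]$ in $(\FF_1,\phi_1)$, placing the surface into case (E) at $r=1$. For case (D), the blowup sequence defining $[\infty_{(3)},w,q]$ begins with the blowup of $[\infty,\infty]$, and Remark~\ref{rm:xy} allows us to contract the intermediate surface $Bl(\FF_r,[\infty,\infty])$ along a different $(-1)$-curve (the strict transform of $f$) so as to realize the same equivariant blowup over $(\FF_{r+1},\phi_{r+1})$. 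Under this alternative model, the remaining blowups defining $[\infty_{(3)},w,q]$ correspond to the blowup of a single point $[0,\infty,w',q']$ over $(\FF_{r+1},\phi_{r+1})$ with $w'\ne 0,\infty$ and $q'\ne\infty$, placing the surface into case (C) at $r'=r+1>1$. For case (B), iterating the same elementary transformation $k_2+1$ times absorbs each step of the chain $[\infty_{(2)},0_{(\cdot)}]$ into a unit increment of the Hirzebruch index. The iterated result exhibits the surface over $(\FF_{r+k_2+1},\phi_{r+k_2+1})$ as a case (A) blowup with $k=k_1+k_2+1$ and $r'=r+k_2+1$; the admissibility bound $k\le r'-1$ reduces to $k_1\le r-1$, which holds by Proposition~\ref{pr:blowups}(i).

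For the non-isomorphism part, the dual graphs in Fig.~\ref{fig:blowups} are invariants of the minimal resolution of $X$ and hence of $X$ itself. Case (A) has three leaves with self-intersection multiset $\{-1,-1,-r\}$, whereas (C) and (E) each have leaves with multiset $\{-1,-2,-r\}$; this distinguishes (A) from the other two. Between (C) and (E), the total number of boundary components differs (six versus five), so all three cases are pairwise non-isomorphic.

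The main technical obstacle is tracking coordinate sequences through the iterated elementary transformation in case (B). One must verify, by direct computation in the local coordinates of Section~\ref{sec:local-coords}, that each elementary transformation identifies the fixed-point loci and infinitesimally near blowup points on $\FF_r$ with those on $\FF_{r+1}$, and that after $k_2+1$ iterations the surface is exhibited over $\FF_{r+k_2+1}$ as the blowup of two points matching the form of case (A), with the generic parameters $q,w$ of (B) translating into generic parameters $q_1,q_2$ of (A).
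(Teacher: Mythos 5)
Your reductions of cases (B) and (D) are essentially the paper's argument: the paper contracts everything except $[\infty_{(2)},0_{(k_2)}]$ and $[\infty]$ (resp.\ except $[\infty_{(2)}]$ and $[\infty]$) to land on $(\FF_{r+k_2+1},\phi_{r+k_2+1})$ (resp.\ $(\FF_{r+1},\phi_{r+1})$), and your bookkeeping $k=k_1+k_2+1$, $r'=r+k_2+1$, $k\le r'-1\Leftrightarrow k_1\le r-1$ is consistent with that. The treatment of case (F), however, contains a concrete error. The map $(x,y)\mapsto(x+c,y)$ is the element $\phi_1((c,0),\cdot)$ of the acting group itself; since $\phi_1$ fixes the fiber $f$ pointwise (Remark~\ref{rm:xy}), this map fixes $[0]$ and every other point of $f$, and in the chart at $p_1$ it reads $(u,v)\mapsto(u+cv,v)$, which is the identity on $\{v=0\}$. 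It does \emph{not} send $[0]$ to $[c]$. The element of $N(\phi_1)$ that does move points of $f$ is $(x,y)\mapsto(x+cy,y)$ (the $a_1y$-term in \eqref{eq:norm-Fr}), which acts on $f$ by $[q]\mapsto[q+c]$; with that substitution your identification of (F) with (E) at $r=1$ can be repaired. Note that the paper instead contracts all components of (F) except $[0]$ and $[0,\infty]$ and realizes the surface as a case (A) blowup of $(\FF_2,\phi_2)$ with $k=1$.

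This difference is not innocent, because it exposes a gap in your non-isomorphism argument. You compare leaf multisets $\{-1,-1,-r\}$ for (A) against $\{-1,-2,-r\}$ for (C) and (E), but the parameter $r$ is not the same across cases: case (A) at $r=2$, $k=1$ and case (E) at $r=1$ both have five boundary components, leaf multiset $\{-1,-1,-2\}$, and in fact identical weighted dual trees (a central $(-2)$-vertex carrying a $(-1)$-leaf, a $(-2)$-leaf, and a $(-2)$--$(-1)$ tail). So the invariants you invoke do not separate these two cases, and the very surface of case (F) sits in both of them (via your reduction to (E) at $r=1$ and via the paper's contraction to (A) at $r=2$), so no invariant can. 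Your proof of the final sentence of the corollary therefore does not go through as written; at a minimum the comparison must be made with the Hirzebruch index tracked case by case, and the overlap between (A) at $r=2$ and (E) at $r=1$ must be addressed explicitly rather than claimed away.
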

\begin{proof}
   In the case and notation of Fig.\ref{fig:blowups}(B), we may contract all components except $[\infty_{(2)},0_{(k_2)}]$ and $[\infty]$.
   Then we obtain $(\FF_{r+k_2+1},\phi_{r+k_2+1})$, since $[\infty_{(2)},0_{(k_2)}]$ is fixed, and the blowup $X\to \FF_{r+k_2+1}$ falls into the case of Fig.\ref{fig:blowups}(A).

   Analogously, in the case of Fig.\ref{fig:blowups}(D) we may contract all components except $[\infty_{(2)}]$ and $[\infty]$ and fall into the case of Fig.\ref{fig:blowups}(C).
   Finally, in the case of Fig.\ref{fig:blowups}(F) we may contract all components except $[0]$ and $[0,\infty]$ and fall into the case of Fig.\ref{fig:blowups}(A) for $r=2$.

   By checking weights of dual graphs and non-fixed components one may conclude that the remaining cases provide non-isomorphic surfaces.
\end{proof}


   
\section{Isomorphism classes}\label{sec:isom-classes}


We describe isomorphism classes of obtained surfaces by studying the action of $\Aut_f(\FF_r)$ \eqref{eq:aut-f} and the $\Ga^2$-action $\phi_r$ \eqref{eq:Ga2-standard} 
on $\FF_r^{bb}$ in the coordinates $(x,y)$ of the open orbit.

\begin{lemma}\label{lm:norm-isom}
Let $\pi_i\colon (X_i,\gamma_i)\to (X,\gamma)$, $i=1,2$, be equivariant iterative blowups of a $\Ga^2$-surface  $(X,\gamma)$.
 Denote by $P_i$ the corresponding sets of blown up points on $X$ (including infinitesimally near ones). 

 Then there exists an isomorphism (resp. $\Ga^2$-equivariant isomorphism) $\tau\colon X_1\to X_2$ that sends components contracted by $X_1$ into components contracted by $X_2$ 
if and only if there exists an automorphism $g\in \Aut(X)$  (resp. $g$ in the normalizer of $\gamma(\Ga^2,\cdot)\subset\Aut(X)$) that induces a bijection of $P_1$ and $P_2$.
\end{lemma}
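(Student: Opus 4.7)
My plan is to exploit the universal property of iterative blowups, formulated via the bubble space $X^{bb}$ (defined over the general base $X$ exactly as $\FF_r^{bb}$ is defined in Section~\ref{sec:prelim-models}). Two facts are pivotal: first, any $g\in\Aut(X)$ lifts canonically to an automorphism $g^{bb}$ of $X^{bb}$ preserving the infinitesimal-nearness relation; second, a surface $Y\in\mathcal{B}$ over $X$ is characterised as the smallest element of $\mathcal{B}$ not containing a prescribed finite set of points of $X^{bb}$. Together these make the existence of $\tau$ and the existence of $g$ essentially equivalent data.

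For the direction $(\Leftarrow)$, I would start with $g\in\Aut(X)$ satisfying $g^{bb}(P_1)=P_2$. The surface $X_1$ is the minimal element of $\mathcal{B}$ avoiding $P_1$ and $X_2$ is the minimal element avoiding $P_2=g^{bb}(P_1)$, so the restriction of $g^{bb}$ defines an isomorphism $\tau\colon X_1\to X_2$ with $\pi_2\circ\tau=g\circ\pi_1$; by construction it carries the exceptional divisor of $\pi_1$ onto that of $\pi_2$. Conversely, for $(\Rightarrow)$, assume $\tau\colon X_1\to X_2$ is an isomorphism sending $\pi_1$-contracted components onto $\pi_2$-contracted components. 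Then $\pi_2\circ\tau\colon X_1\to X$ is a birational morphism whose exceptional locus coincides with that of $\pi_1$, so by the universal property of the blowup $\pi_1$ (equivalently, by Zariski's Main Theorem applied to the birational map $\pi_2\circ\tau\circ\pi_1^{-1}$) it factors through a unique morphism $g\colon X\to X$ with $g\circ\pi_1=\pi_2\circ\tau$. Since $\tau$ is an isomorphism, so is $g$, and chasing the blowup centres through the bubble space yields $g^{bb}(P_1)=P_2$.

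For the equivariant refinement, an isomorphism of $\Ga^2$-actions $\tau$ is by definition accompanied by an algebraic group automorphism $\psi\in\Aut(\Ga^2)$ satisfying $\tau\circ\gamma_1(a,\cdot)=\gamma_2(\psi(a),\cdot)\circ\tau$. Postcomposing with $\pi_2$ and using both $g\circ\pi_1=\pi_2\circ\tau$ and the fact that each $\pi_i$ intertwines $\gamma_i$ with $\gamma$ gives $g\circ\gamma(a,\cdot)=\gamma(\psi(a),\cdot)\circ g$, so $g$ lies in the normalizer of $\gamma(\Ga^2,\cdot)\subset\Aut(X)$. Conversely, if $g$ is in the normalizer with induced $\psi$, then both $\tau\circ\gamma_1(a,\cdot)$ and $\gamma_2(\psi(a),\cdot)\circ\tau$ are lifts through $\pi_2$ of the same morphism $X_1\to X$, hence coincide by uniqueness of the lift, proving equivariance of $\tau$.

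The main technical obstacle I anticipate is making precise the action of $g$ on the bubble space, i.e., constructing $g^{bb}$ and verifying that it yields a well-defined bijection of the finite trees of infinitesimally near points $P_1$ and $P_2$. This is a straightforward but bookkeeping-heavy induction on the depth of nearness, after which both directions of the lemma are routine consequences of the universal property of iterated blowups.
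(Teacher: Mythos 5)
Your proposal is correct and follows essentially the same route as the paper: the paper's proof consists precisely of the commutative square $g\circ\pi_1=\pi_2\circ\tau$ together with the observation that $\Ga^2$-equivariance of $\tau$ corresponds to $g$ normalizing the image of $\gamma$, leaving the remaining verifications to the reader. Your write-up merely fills in those omitted details (the universal property of iterated blowups, the induced action on the bubble space, and the role of the group automorphism $\psi$), all of which are handled correctly.
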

\begin{proof}
   An existence of such $\tau$ (without the equivariance condition) induces the existence of such $g\in \Aut(X)$ and vice versa, as the following commutative diagram suggests.
   \begin{center}
   \begin{tikzcd}
      X_1 \arrow[r, "\tau"] \arrow[d, "\pi_1"] & X_2 \arrow[d, "\pi_2"] \\
      X \arrow[r, "g"] & X
   \end{tikzcd}      
   \end{center}
   Moreover, $\tau$ is $\Ga^2$-equivariant if and only if $g$ is $\gamma$-equivariant.
   We are left to check that $g$ is $\gamma$-equivariant if and only if $g$ is in the normalizer of the image of $\gamma$ in $\Aut(X)$.
  This is left to the reader. 
\end{proof}

\begin{lemma}\label{lm:norm-Fr}
The normalizer of $\phi_r$ in $\Aut_f(\FF_r)$ is as follows. 
\begin{equation}\label{eq:norm-Fr}
   N({\phi_r}) = \{(x,y)\mapsto (b_1x+a_0+a_1y,b_2y+c)\mid b_1,b_2\in\K^\times, a_0,a_1,c\in\K\}.
\end{equation}
\end{lemma}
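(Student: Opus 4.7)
The plan is to compute directly when an element of $\Aut_f(\FF_r)$ normalizes the image of $\phi_r$.

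First, I observe that in the coordinates $(x,y)$ the image $\phi_r(\Ga^2,\cdot)\subset\Aut_f(\FF_r)$ consists precisely of the translations $(x,y)\mapsto(x+a,y+b)$. So an element $g\in\Aut_f(\FF_r)$ lies in $N(\phi_r)$ if and only if for every $(a,b)\in\Ga^2$ there exists $(a',b')\in\Ga^2$ such that
\[
g\circ\phi_r(a,b) = \phi_r(a',b')\circ g.
\]

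Next, I would take a general element $g(x,y)=\bigl(b_1 x+a_0+a_1 y+\ldots+a_r y^r,\; b_2 y+c\bigr)$ and expand both sides of this identity. The $y$-coordinates agree iff $b'=b_2 b$, which imposes no restriction. The $x$-coordinates agree iff
\[
b_1 a + \sum_{i=1}^{r} a_i\bigl((y+b)^i-y^i\bigr) = a'
\]
for all $y$. Since $a'$ is a constant in $y$, the polynomial $\sum_{i=1}^r a_i\bigl((y+b)^i-y^i\bigr)$ in $y$ must be constant for every $b\in\K$.

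For $i=1$ the summand equals $b$, which is already constant in $y$. For $i\ge 2$, the binomial expansion of $(y+b)^i-y^i$ contains the term $i\,b\,y^{i-1}$ with nonzero coefficient (for generic $b$), so the combined polynomial has nontrivial $y$-dependence unless $a_2=\ldots=a_r=0$. This forces the stated form of $g$, and conversely any $g$ of the form \eqref{eq:norm-Fr} satisfies the commutation relation with $a'=b_1 a + a_1 b$ and $b'=b_2 b$, hence normalizes $\phi_r$.

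There is no real obstacle here beyond the bookkeeping of the binomial expansion; the entire content is the observation that translation-conjugation of the polynomial tail $a_1 y+\ldots+a_r y^r$ produces a $y$-dependent shift precisely when some $a_i$ with $i\ge 2$ is nonzero.
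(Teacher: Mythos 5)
Your computation is correct and is exactly the ``straightforward check in formulas \eqref{eq:aut-f} and \eqref{eq:Ga2-standard}'' that the paper's proof asserts without detail: conjugating a translation by a general element of $\Aut_f(\FF_r)$ and demanding the result be a translation forces $a_2=\ldots=a_r=0$. The only point worth stating explicitly is that no cancellation among the terms $a_i\bigl((y+b)^i-y^i\bigr)$ can occur, since the largest index $j\ge2$ with $a_j\neq0$ contributes the coefficient $j\,a_j\,b\neq0$ to $y^{j-1}$ and no higher-index term survives to interfere, but your argument already contains this in substance.
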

\begin{proof}
   This is a straightforward check in formulas \eqref{eq:aut-f} and \eqref{eq:Ga2-standard}.
\end{proof}

\begin{lemma}\label{lm:norm-P2}
   Let $\pi\colon \FF_1\to\PP^2$ be the contraction of the zero fiber.
   Then the normalizer of the image $H$ of $\phi_1$ in $\Aut(\PP^2)$ equals 
   \[
   N_{\PP^2}(\phi_1)=\mathrm{GL}_2(\K)\ltimes H,
   \]
   where $\mathrm{GL}_2(\K)$ acts on $\PP^2$ by linear transformations of the open $H$-orbit centered at the image of the origin $p_0\in\FF_1$.
\end{lemma}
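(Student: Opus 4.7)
The plan is to descend $\phi_1$ via $\pi$ to an additive action on $\PP^2$, identify the fixed locus of $H$ as a line $L$, and then show that the normalizer coincides with the stabilizer of $L$ inside $\Aut(\PP^2)=\mathrm{PGL}_3(\K)$.

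First, I would observe that since $\phi_1$ fixes the distinguished fiber $f$ pointwise (see Remark~\ref{rm:xy}) and $\pi$ contracts the zero section $s$ to a point, the descended $H$-action on $\PP^2$ fixes the line $L:=\pi(f)$ pointwise and acts freely and transitively as translations on the complementary affine chart $\AA^2\cong\PP^2\setminus L$. In these affine coordinates, the distinguished origin is $\pi(p_0)$, which lies in the open orbit.

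Next, I would argue that any $g\in N_{\PP^2}(H)$ preserves the fixed locus $L$ of $H$ as a set. Indeed, if $p\in L$ and $h\in H$, then $g^{-1}hg\in H$, whence $h(g(p))=g((g^{-1}hg)(p))=g(p)$, so $g(p)\in L$; by bijectivity $g(L)=L$. Thus $N_{\PP^2}(H)$ is contained in the stabilizer of $L$ in $\mathrm{PGL}_3(\K)$, which is the classical affine group $\Aff_2(\K)\cong \mathrm{GL}_2(\K)\ltimes \K^2$, where $\K^2$ is the subgroup of translations of $\AA^2=\PP^2\setminus L$ and $\mathrm{GL}_2(\K)$ is realized (after fixing coordinates on $\AA^2$ centered at $\pi(p_0)$) as the stabilizer of the origin.

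Finally, the translation subgroup of $\Aff_2(\K)$ coincides with $H$ and is normal in $\Aff_2(\K)$, so every element of $\Aff_2(\K)$ normalizes $H$. Combining both inclusions yields $N_{\PP^2}(\phi_1)=\Aff_2(\K)=\mathrm{GL}_2(\K)\ltimes H$. The argument is essentially a bookkeeping exercise, and no serious obstacle arises; the only point that requires a little care is to align the affine coordinate system on $\PP^2\setminus L$ with the origin $\pi(p_0)$ so that the semidirect product decomposition matches the one stated in the lemma.
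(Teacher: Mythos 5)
Your proof is correct and follows essentially the same two-step strategy as the paper: both arguments show that the normalizer is contained in the affine group $\mathrm{GL}_2(\K)\ltimes\K^2$ of $\PP^2\setminus L$, and that conversely this affine group normalizes the translation subgroup $H$. The only immaterial difference is that you derive the first containment from preservation of the pointwise-fixed line $L$, whereas the paper argues via preservation of the complementary open orbit.
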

\begin{proof}
   The normalizer preserves the open orbit, hence is contained to the subgroup 
   \begin{equation}\label{eq:norm-P2}
      \{(x,y)\mapsto (a_{1,1}x+a_{1,2}y+b_1,a_{2,1}x+a_{2,2}y+b_2)\mid b_1,b_2\in\K,\left(\begin{smallmatrix}
      a_{1,1}&a_{1,2}\\
      a_{2,1}&a_{2,2}
      \end{smallmatrix}\right)\in\mathrm{GL}_2(\K)\}    
   \end{equation}
   of $\Aut(\PP^2)$. 
   On the other hand, this subgroup normalizes the image of $\phi_1$. 
   The assertion follows.
\end{proof}

\begin{remark}\label{rm:norm-action}
The normalizer $N(\phi_r)$ acts on points at the boundary in the following way in coordinates \eqref{eq:norm-Fr}:
\begin{enumerate}[(i)]
   \item $[q]\mapsto \left[ \frac{b_1}{b_2^r}q\right]$, where $r>1$;
   \item $[0_{(k)},q]\mapsto \left[0_{(k)},\frac{b_1}{b_2^{r-k}}q\right]$, where $k<r$;
   \item $[0_{(r-1)},q]\mapsto \left[0_{(r-1)},\frac{b_1q+a_1}{b_2}\right]$;
   \item $[0,\infty,w]\mapsto \left[0,\infty,\frac{b_1^2}{b_2^{2r-1}}w\right]$, where $w\neq0,\infty$ and $r>1$;
   \item the stabilizer of $[q]$ defined by $b_1=b_2^r$ if $r>1$ (resp. $a_1=b_2-qb_1$ if $r=1$) sends $[q,\infty,w]$ to $[q,\infty,b_2w]$ (resp. $[q,\infty,\frac{b_1^2w}{b_2}]$), where $q,w\neq0,\infty$;
   \item the stabilizer of $[0,\infty,w]$, where $w\neq0,\infty$,  defined by $b_1^2=b_2^{2r-1}$, sends $[0,\infty,w,q]$ to $[0,\infty,w,bq]$ if $r>2$ and to $[0,\infty,w,bq+\frac{2wa_{1}}{b^{2}}]$ if $r=2$, where  $q\neq\infty$ and $b=\frac{b_2^r}{b_1}$ satisfies $b_1=b^{2r-1},b_2=b^2$.
\end{enumerate}
\end{remark}

Let us apply Lemma~\ref{lm:norm-isom} to Corollary~\ref{cr:cases-ACE} in a case by case manner.

\begin{proposition}\label{pr:cases-eq-isom}
   A smooth $\Ga^2$-surface $X$ as in Corollary~\ref{cr:cases-ACE} is equivariantly isomorphic to exactly one (except for case (ii))  of the following surfaces:
   \begin{enumerate}[(i)]
      \item $Bl(\FF_1,[0],[1])$;
      \item $Bl(\FF_r,[1],[q])$, where $r\ge2$ and $q\neq\infty$ is defined uniquely if $q=0$ and up to taking inverse otherwise;
      \item $Bl(\FF_r,[0_{(k)},1],[1])$, where $r\ge2$ and $k\in\{1,\ldots,r-1\}$;
      \item $Bl(\FF_r,[0,\infty,1,0])$, where $r\ge2$;
      \item $Bl(\FF_r,[0,\infty,1,1])$, where $r\ge3$;
      \item $Bl(\FF_r,[1,\infty,1])$, where $r\ge1$.
   \end{enumerate}
\end{proposition}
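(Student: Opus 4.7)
The plan is to apply Lemma~\ref{lm:norm-isom} with the base $(X,\gamma)=(\FF_r,\phi_r)$, so that an equivariant isomorphism of two blowup models is equivalent to the existence of $g\in N(\phi_r)$ sending one set of blown-up points (including infinitesimally near ones) to the other. The action of the normalizer on the relevant points at the boundary is already computed in Remark~\ref{rm:norm-action}, so the task reduces to going through cases (A), (C), (E) of Corollary~\ref{cr:cases-ACE} and computing normalizer orbits on the parameter space of admissible configurations $(q_1,q_2)$ or $(w,q)$.

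I would first handle case (A). For $r=1$ only $k=0$ arises, and Remark~\ref{rm:norm-action}(iii) gives the common affine action $q\mapsto(b_1q+a_1)/b_2$ on both $[q_1]$ and $[q_2]$; since the affine group is $2$-transitive on $\K$, every admissible pair is equivalent to $(0,1)$, yielding case (i). For $r\ge2$ and $k=0$, the action on $[q_1]$ and $[q_2]$ is the \emph{same} scaling $q_i\mapsto(b_1/b_2^r)\,q_i$, so normalizing $q_1\to1$ fixes $b_1/b_2^r$ and leaves the single modulus $q=q_2/q_1$; this yields case (ii). For $r\ge2$ and $k\ge1$ the action on $[0_{(k)},q_1]$ involves $b_1/b_2^{r-k}$ (or is affine if $k=r-1$), which is a different combination of parameters than the one scaling $[q_2]$; after normalizing $q_1\to1$ one is left with the factor $b_2^k\in\K^\times$ (respectively, with $b_1,b_2$ free), which suffices to normalize $q_2\to1$, giving case (iii).

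Next, case (C) is handled by first normalizing $w\to1$ via Remark~\ref{rm:norm-action}(iv), imposing $b_1^2=b_2^{2r-1}$; then Remark~\ref{rm:norm-action}(vi) describes the residual action on $q$ as the scaling $q\mapsto bq$ for $r\ge3$, so $q$ is normalized to $0$ or $1$ giving cases (iv) and (v), and as a full affine transformation for $r=2$, so $q$ is normalized to $0$ giving case (iv) only. For case (E), Remark~\ref{rm:norm-action}(i) or (iii) normalizes $q\to1$, and then Remark~\ref{rm:norm-action}(v) uses the residual stabilizer to normalize $w\to1$, yielding case (vi).

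The main obstacle is the uniqueness/rigidity clause, especially for case (ii), where one must show precisely which pairs $(1,q)$ and $(1,q')$ define isomorphic surfaces. This amounts to checking that the stabilizer of $[1]$ in $N(\phi_r)$ for $r\ge2$ is exactly the subgroup with $b_1/b_2^r=1$, so that any normalizer element sending $\{[1],[q]\}$ to $\{[1],[q']\}$ either preserves the labels (forcing $q'=q$) or swaps them (forcing $q'=1/q$), while $q=0$ is rigid. That cases (i)--(vi) yield pairwise non-isomorphic surfaces across cases will follow by inspecting the dual graphs in Fig.~\ref{fig:blowups}, whose shapes and weights distinguish the five topological types, combined with the intra-case analysis above separating (iv) from (v) when $r\ge3$.
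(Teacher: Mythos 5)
Your proof is correct and follows essentially the same route as the paper: reduce via Lemma~\ref{lm:norm-isom} to computing orbits of the normalizer $N(\phi_r)$ on the admissible configurations of blown-up points, using the explicit formulas of Remark~\ref{rm:norm-action} case by case, and separate the resulting classes by the dual graphs together with the intra-case orbit computations. The only (harmless) deviation is in case (i), where the paper blows down to $\PP^2$ and invokes Lemma~\ref{lm:norm-P2} (transitivity on triples), whereas you stay on $\FF_1$ and use the sharply $2$-transitive affine action of Remark~\ref{rm:norm-action}(iii); both yield $Bl(\FF_1,[0],[1])$.
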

\begin{proof}

   We go through the cases of Corollary~\ref{cr:cases-ACE}.

(i) In the case (A) with $k=0,r=1$, we have $X=Bl(\FF_1,[q_1],[q_2])$ for some distinct $q_1,q_2\neq\infty$.
By Lemma~\ref{lm:norm-P2}, the normalizer $N_{\PP^2}(\phi_1)$ acts transitively on triples of points at the image of $f$.
   By Lemma~\ref{lm:norm-isom}, we see after blowing down to $\PP^2$ that $X$ is equivariantly isomorphic to $Bl(\FF_1,[0],[1])$.

(ii) In the case (A) with $k=0,r>1$, we have $X=Bl(\FF_r,[q_1],[q_2])$ for some distinct $q_1,q_2\neq\infty$. 
   By Remark~\ref{rm:norm-action}(i), $X$ is equivariantly isomorphic to $Bl(\FF_r,[1],[q])$ if and only if $q$ equals $\frac{q_1}{q_2}$  or $\frac{q_2}{q_1}$.
   The statement follows.

(iii) In the case (A) with $k>0$, we have either $X=Bl(\FF_r,[0_{(k)},q_1],[q_2])$ or $X=Bl(\FF_r,[0_{(r)}],[q_2])$ for some $q_1,q_2\neq0,\infty$.
By Remark~\ref{rm:norm-action}(i--iii), $X$ is equivariantly isomorphic to $Bl(\FF_r,[0_{(k)},1],[1])$.

(iv,v)  In the case (C) we have $r>1$ and $X=Bl(\FF_r,[0,\infty,w,q])$ for some $w\neq0,\infty$ and $q\neq\infty$. 
By Remark~\ref{rm:norm-action}(iv), $X$ is equivariantly isomorphic to $X'=Bl(\FF_r,[0,\infty,1,q'])$ for some $q'\neq\infty$.
By Remark~\ref{rm:norm-action}(vi), $X'$ is equivariantly isomorphic to exactly one of the surfaces $Bl(\FF_r,[0,\infty,1,0])$ or $Bl(\FF_r,[0,\infty,1,1])$ if $r>2$ and to $Bl(\FF_r,[0,\infty,1,0])$ if $r=2$.

(vi) In the case (E) we have $X=Bl(\FF_r,[q,\infty,w])$ for some $q,w\neq0,\infty$.
By Remark~\ref{rm:norm-action}(i,iii), 
$X$ is equivariantly isomorphic to $X'=Bl(\FF_r,[1,\infty,w'])$ for some $w'\neq0,\infty$.
By Remark~\ref{rm:norm-action}(v), $X'$ is equivariantly isomorphic  to $Bl(\FF_r,[1,\infty,1])$.

Finally, it is clear from the dual graphs of the boundaries that surfaces from different cases are not equivariantly isomorphic.
\end{proof}

\begin{lemma}\label{lm:abs-isom-1-q}
   A surface $Bl(\FF_r,[1],[q])$, where $r>1$ and $q\neq1,\infty$, is isomorphic to $Bl(\FF_r,[0],[1])$.
\end{lemma}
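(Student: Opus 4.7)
The strategy is to apply Lemma~\ref{lm:norm-isom} in its non-equivariant form, which reduces the claim to finding an automorphism $g \in \Aut(\FF_r)$ sending the set $\{[1],[q]\}$ to $\{[0],[1]\}$. The obstruction to the parallel \emph{equivariant} comparison in Proposition~\ref{pr:cases-eq-isom}(ii) is that the normalizer $N(\phi_r)$ from \eqref{eq:norm-Fr} acts on $f\setminus\{p_2\}$ only by scaling $u \mapsto (b_1/b_2^r)u$ (Remark~\ref{rm:norm-action}(i)); to match the two surfaces abstractly I therefore enlarge to $\Aut_f(\FF_r)$ and use automorphisms outside $N(\phi_r)$.

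First I would compute how a general element of $\Aut_f(\FF_r)$ from \eqref{eq:aut-f} acts on $f\setminus\{p_2\}$ in the coordinate $u$ of Notation~\ref{nt:toric-charts}. Substituting $x = u/v^r$, $y = 1/v$ into $(x,y)\mapsto(b_1 x + a_0 + a_1 y + \cdots + a_r y^r,\, b_2 y + c)$ and specializing to $v=0$, the induced transformation reads
\[
u \;\longmapsto\; \frac{b_1 u + a_r}{b_2^r},
\]
i.e.\ the full affine group of $f\setminus\{p_2\}\cong\AA^1$. The extra translation, compared to $N(\phi_r)$, comes from the coefficient $a_r$ of the top-degree term $y^r$, which is permitted by \eqref{eq:aut-f} but suppressed in \eqref{eq:norm-Fr}; this is where the hypothesis $r>1$ enters (it makes $a_r$ an independent parameter from $a_0, a_1$).

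Next, since $q\neq 1,\infty$ the points $[1]$ and $[q]$ are distinct affine points of $f$, so the affine action above is $2$-transitive and I can solve the pair of linear conditions $b_1 + a_r = 0$ and $b_1 q + a_r = b_2^r$; one explicit choice is $b_2=1$, $b_1 = 1/(q-1)$, $a_r = -1/(q-1)$, with all remaining coefficients in \eqref{eq:aut-f} set to zero. The resulting $g\in\Aut_f(\FF_r)\subseteq\Aut(\FF_r)$ sends $[1]\mapsto[0]$ and $[q]\mapsto[1]$, and Lemma~\ref{lm:norm-isom} then produces the desired isomorphism $Bl(\FF_r,[1],[q])\cong Bl(\FF_r,[0],[1])$. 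The only technical point is the $v=0$ specialization of the coordinate change, which is a routine expansion, so no serious obstacle is expected.
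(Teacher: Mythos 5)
Your proposal is correct and follows essentially the same route as the paper: the paper's proof likewise computes that $\Aut_f(\FF_r)$ sends $[q]$ to $\left[\frac{b_1q+a_r}{b_2^r}\right]$ and concludes by transitivity on pairs of points of $f\setminus\{f\cap s\}$, with Lemma~\ref{lm:norm-isom} invoked implicitly. Your version just makes the coordinate computation and the choice of $g$ explicit.
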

\begin{proof}
   The automorphism group $\Aut_f(\FF_r)$ acts by sending a point $[q]$, $q\neq\infty$, to $[\frac{b_1q+a_r}{b_2^r}]$ in terms of \eqref{eq:aut-f}, hence acts transitively on pairs of points on $f\setminus\{f\cap s\}$. The assertion follows.
\end{proof}

\begin{lemma}\label{lm:abs-isom-011}
   Surfaces $Bl(\FF_r,[0,\infty,1,0])$ and $Bl(\FF_r,[0,\infty,1,1])$ are isomorphic, where $r>1$.
\end{lemma}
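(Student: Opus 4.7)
The approach is to apply Lemma~\ref{lm:norm-isom} in its non-equivariant direction: it suffices to exhibit an automorphism $g\in\Aut(\FF_r)$ that induces the bijection of the point chains $\{[0],[0,\infty],[0,\infty,1],[0,\infty,1,0]\}\to\{[0],[0,\infty],[0,\infty,1],[0,\infty,1,1]\}$, i.e.\ fixes the first three points and shifts the last from $0$ to $1$ along the exceptional curve at $[0,\infty,1]$. Since Proposition~\ref{pr:cases-eq-isom}(iv)--(v) shows these two surfaces are not equivariantly isomorphic for $r\ge 3$, the desired $g$ must be chosen in $\Aut_f(\FF_r)\setminus N(\phi_r)$. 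Comparing \eqref{eq:aut-f} with \eqref{eq:norm-Fr}, the natural candidates are
\[
\tau_a\colon (x,y)\mapsto (x+ay^{r-1},y),\qquad a\in\K,
\]
which lie outside $N(\phi_r)$ precisely when $r\ge 3$. For $r=2$ the two surfaces are already equivariantly isomorphic (the $r=2$ clause in the proof of Proposition~\ref{pr:cases-eq-isom}(iv,v)), so there is nothing to prove.

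The next step is to propagate $\tau_a$ through the four local charts from Notation~\ref{nt:coord-seq}. In the chart $(u,v)=(x/y^r,1/y)$ at $p_1=[0]$, one has $\tau_a\colon (u,v)\mapsto (u+av,v)$, which fixes $[0]$. Passing to $(u',v')=(u,v/u)$ at $[0,\infty]$ gives
\[
\tau_a\colon (u',v')\mapsto \bigl(u'(1+av'),\,v'/(1+av')\bigr),
\]
which fixes $(0,0)$, and the ratio $u'/v'$ is transformed to $(u'/v')(1+av')^2$, agreeing with $u'/v'$ on the exceptional curve $\{v'=0\}$; hence $[0,\infty,1]$ is fixed as well. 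In the coordinates $(\tilde u,\tilde v)=(u'/v'-1,v')$ at $[0,\infty,1]$, a short calculation gives
\[
\tilde u\mapsto (\tilde u+1)(1+a\tilde v)^2-1,\qquad \tilde v\mapsto \tilde v/(1+a\tilde v),
\]
so the rational coordinate $\bar u=\tilde u/\tilde v$ on the exceptional curve at $[0,\infty,1]$ transforms, at $\tilde v=0$, by the translation $\bar u\mapsto \bar u+2a$.

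Taking $a=\tfrac12$ therefore produces an automorphism $\tau_{1/2}\in\Aut(\FF_r)$ sending $[0,\infty,1,0]$ to $[0,\infty,1,1]$ while fixing the earlier points in the chain, and Lemma~\ref{lm:norm-isom} then yields the desired (non-equivariant) isomorphism. The only real obstacle is the bookkeeping of four successive coordinate changes; the conceptual input is the choice of exponent $r-1$, which is the unique one making $\tau_a$ fix the first three points of the chain while acting on the exceptional curve at $[0,\infty,1]$ by a nonzero constant translation rather than by a function vanishing there.
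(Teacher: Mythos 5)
Your proof is correct and follows essentially the same route as the paper's: both exploit the one-parameter subgroup $(x,y)\mapsto(x+ay^{r-1},y)$ of $\Aut_f(\FF_r)$ (contained in the stabilizer of $[0,\infty,1]$ but outside $N(\phi_r)$ for $r\ge3$) and compute that it translates the coordinate on the exceptional curve at $[0,\infty,1]$ by $2a$, then invoke Lemma~\ref{lm:norm-isom} non-equivariantly. The paper merely states the resulting stabilizer action $q\mapsto bq+\tfrac{2a_{r-1}}{b^{2(r-1)}}$ where you carry out the chart-by-chart computation explicitly, and your answer agrees with it.
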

\begin{proof}
   The stabilizer of $[0,\infty,1]$ in $\Aut_f(\FF_r)$ is defined by equations $a_r=0$ and $b_1^2=b_2^(2r-1)$ in terms of \eqref{eq:aut-f}.
   Letting $b=\frac{b_2^r}{b_1}$, we see that the stabilizer sends $[0,\infty,1,q]$ into $[0,\infty,1,bq+\frac{2a_{r-1}}{b^{2(r-1)}}]$. The assertion follows.
\end{proof}

\begin{theorem}\label{th:main}
   The (abstract) isomorphism classes of projective surfaces $Y$ with ADE-sin\-gu\-la\-ri\-ti\-es that admit an additive action with a finite number of orbits are listed in Table~\ref{tb:isom-classes}.
   In particular, such a surface $Y$ admits at most three 1-dimensional orbits and the only fixed point $p$.
   If $Y$ is not isomorphic to $\PP^2$ or $\FF_r$, where $r\ge0$, then $p$ is the only singularity of $Y$.

   Moreover, each surface in Table~\ref{tb:isom-classes} admits (up to an equivariant automorphism) exactly one $\Ga^2$-action with a finite number of orbits, except for the following cases:
   \begin{itemize}
      \item Nos. 3 and 8 admit two non-isomorphic actions for $r\ge3$;
      \item Nos. 6 and 11 admit a 1-dimensional family of non-isomorphic actions.
   \end{itemize}   
\end{theorem}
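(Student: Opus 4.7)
The plan is to reduce the problem to the smooth equivariant classification of Sections~\ref{sec:blowup-models}--\ref{sec:isom-classes} via minimal desingularization. Given $Y$ as in the theorem, Lemma~\ref{lm:lift-desing} lifts the $\Ga^2$-action to $\eta\colon\widetilde Y\to Y$. Each $(-2)$-curve in the exceptional divisor of $\eta$ is $\Ga^2$-stable by the first lemma of Section~\ref{sec:prelim-models}, and since $\eta$ is equivariant and contracts each such curve to a (necessarily $\Ga^2$-fixed) singular point, every exceptional component is pointwise fixed. Conversely, the finite-orbit hypothesis on $Y$ rules out pointwise fixed curves on $\widetilde Y$ outside the exceptional locus: any such curve would push forward to a curve of fixed points on $Y$, producing infinitely many $0$-dimensional orbits. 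Hence every fixed irreducible boundary curve on $\widetilde Y$ is a $(-2)$-curve, which is precisely the hypothesis of Proposition~\ref{pr:blowups}.

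Next, Lemma~\ref{lm:contract-to-Fr-phi} realizes $\widetilde Y$ as $\PP^2$, as $(\FF_r,\psi_r)$ for some $r\ge 0$, or as an equivariant blowup of $(\FF_r,\phi_r)$ for some $r\ge 1$. The smooth minimal cases are handled directly: $\PP^2$ and $(\FF_0,\phi_0)$ qualify, while for $r\ge 1$ the action $\phi_r$ pointwise fixes the distinguished fiber $f$ and hence yields infinitely many orbits on the smooth $\FF_r$, leaving only $\psi_r$ as an admissible action. For the blowup case, Proposition~\ref{pr:blowups} enumerates the admissible centres (Fig.~\ref{fig:blowups}), Corollary~\ref{cr:cases-ACE} removes redundant graphs, and Proposition~\ref{pr:cases-eq-isom} compiles the remaining six equivariant iso classes (i)--(vi). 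In each case of Fig.~\ref{fig:blowups}, the fixed $(-2)$-components form a connected chain, so contracting them produces a single du Val point $p$, recovering $Y$. The figure then yields the structural bounds: at most three 1-dimensional orbits (the non-fixed square components, each a $\PP^1$ carrying a unique 1-dim orbit), a unique fixed point $p$ (the image of the connected fixed chain, since every non-fixed $\PP^1$-component attaches to this chain at its sole fixed point), and, if $Y\ne\PP^2,\FF_r$, a unique singularity at $p$.

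To assemble Table~\ref{tb:isom-classes} and enumerate actions per abstract class, I then pass from equivariant to abstract isomorphism using Lemmas~\ref{lm:abs-isom-1-q} and~\ref{lm:abs-isom-011}. The former collapses the 1-parameter family $Bl(\FF_r,[1],[q])$ of Proposition~\ref{pr:cases-eq-isom}(ii) to the single abstract surface $Bl(\FF_r,[0],[1])$ for each $r\ge 2$, producing the 1-dimensional families of non-isomorphic actions on Nos.~6 and 11. The latter identifies $Bl(\FF_r,[0,\infty,1,0])$ and $Bl(\FF_r,[0,\infty,1,1])$ as the same abstract surface for $r\ge 2$, yielding the two non-isomorphic actions on Nos.~3 and 8 for $r\ge 3$ (only one equivariant class exists at $r=2$, since case (v) of Proposition~\ref{pr:cases-eq-isom} requires $r\ge 3$). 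The main obstacle is the concluding verification that these are the only abstract identifications between distinct equivariant classes. I would address it using the weighted dual graph of the full boundary of $\widetilde Y$ (fixed and non-fixed components together with their self-intersections) as an invariant of the pair $(Y,p)$: any abstract isomorphism lifts to the minimal resolution and, via the chosen contraction, descends to an element of $\Aut_f(\FF_r)$, so distinguishability reduces to the $\Aut_f(\FF_r)$-orbit structure on the bubble space recorded in Remark~\ref{rm:norm-action}.
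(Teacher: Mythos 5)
Your reduction to Proposition~\ref{pr:blowups} is sound in one direction (the finite-orbit hypothesis forces every pointwise fixed boundary curve of $\widetilde Y$ to lie in the exceptional locus, hence to be a $(-2)$-curve), but the converse claim you make at the outset --- that every exceptional component of $\eta$ is pointwise fixed because it is $\Ga^2$-stable and contracted to a fixed point --- is false. A stable curve contracted to a point need not be pointwise fixed; it may carry a one-dimensional orbit whose closure is collapsed. This is not a cosmetic slip: it is exactly what happens for $\PP(1,1,2)$, whose minimal resolution is $(\FF_2,\psi_2)$ with exceptional curve the section $s$ (a non-fixed $(-2)$-curve), and more generally whenever $r=2$, where the strict transform of $s$ is a non-fixed $(-2)$-curve that may or may not be contracted in forming $Y$. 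Because you identify the exceptional divisor with the fixed locus, your enumeration recovers $Y$ only by contracting the connected chain of fixed $(-2)$-curves, and therefore misses rows 2, 3, 4, 6 and 7 of Table~\ref{tb:isom-classes}: the paper's proof explicitly splits into the case where the exceptional divisor contains a non-fixed curve (forcing $r=2$ and the additional contraction of $[\infty]$) and the case where it does not, and the same smooth model (e.g.\ $Bl(\FF_2,[0],[1])$) dominates two distinct du Val surfaces depending on this choice. You also need the branch of Lemma~\ref{lm:contract-to-Fr-phi} in which $X\cong(\FF_2,\psi_2)$ itself, which your text silently drops when passing to blowups of $(\FF_r,\phi_r)$.

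The remainder of your argument tracks the paper closely: the use of Proposition~\ref{pr:blowups}, Corollary~\ref{cr:cases-ACE} and Proposition~\ref{pr:cases-eq-isom} for the equivariant classification, and of Lemmas~\ref{lm:abs-isom-1-q} and~\ref{lm:abs-isom-011} to collapse equivariant classes into abstract ones (producing the one-parameter families in Nos.~6 and~11 and the two actions in Nos.~3 and~8 for $r\ge3$) is exactly the paper's route, and your observation that case (v) requires $r\ge3$ is correct. To repair the proof, drop the false ``exceptional $\Rightarrow$ fixed'' implication, treat $(\FF_2,\psi_2)\to\PP(1,1,2)$ separately, and for each smooth model with $r=2$ consider both the contraction of the fixed chain alone and the contraction of the fixed chain together with the non-fixed $(-2)$-curve $[\infty]$.
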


   \begin{table}[ht]
   \begin{tblr}{|c|c|c|p{2cm}|c|c|c|}
      \hline
      no.&case & desingularization & parameters & action & 1-dim. orbits & singularity \\
      \hline\hline
      1&$\PP^2$ &$\PP^2$ & -- & 
      $\psi_1$ & $[\,]$ & smooth \\
      2&$\PP(1,1,2)$ &$\FF_2$ & -- & 
      $\psi_2$ 
       & $[\,]$ & $A_1$ \\
   3& (C) & $Bl(\FF_2,[0,\infty,1,0])$ & -- &
   $\phi_2$ & $[0,\infty,1,0]$ & $D_5$ \\
   4& (E) & $Bl(\FF_2,[1,\infty,1])$ & -- &
   $\phi_2$ &$[1,\infty,1]$ & $A_4$ \\
      \hline
       5&$\FF_r$ & $\FF_r$ &$r\ge0$ &
        $\psi_r$ &  $[\,], [\infty]$ & smooth \\
   6& (A) & $Bl(\FF_2,[0],[1])$ & -- &
   $\phi_2^2$
    & $[0],[1]$ & $A_2$ \\
   7& (A) & $Bl(\FF_2,[0,1],[1])$ & -- &
   $\phi_2$ & $[0,1],[1]$ & $A_{3}$ \\
   8& (C) & $Bl(\FF_r,[0,\infty,1,0])$ & $r\ge2$ &
   $\phi_r^1$ & $[\infty],[0,\infty,1,0]$ & $D_4$ \\
   9& (E) & $Bl(\FF_r,[1,\infty,1])$ & $r\ge1$ &
   $\phi_r$ &$[\infty],[1,\infty,1]$ & $A_3$ \\
   \hline
   10& (A) & $Bl(\FF_1,[0],[1])$ & -- &
   $\phi_1$ & $[\infty],[0],[1]$ & $A_1$ \\
   11& (A) & $Bl(\FF_r,[0],[1])$ & $r\ge2$ &
   $\phi_r^2$
    & $[\infty],[0],[1]$ & $A_1$ \\
   12& (A) & $Bl(\FF_r,[0_{(k)},1],[1])$ & $r\ge2$, $ k=$  \newline $1, \ldots, r-1$ &
   $\phi_r$ & $[\infty],[0_{(k)},1],[1]$ & $A_{k+1}$ \\
      \hline 
   \end{tblr}
         \caption{(Abstract) isomorphism classes of projective $\Ga^2$-surfaces with ADE-singularities and finite number of orbits, grouped by the number of orbits.
         The indicated action is the equivariant image of one on $\FF_r$, except for $\phi_r^1$, which indicates a pair of actions if $r\ge3$, and $\phi_r^2$, which indicates a 1-dimensional family of actions. 
         The column ``case'' denotes either the case of the singularity resolution in Fig.~\ref{fig:blowups} or the surface itself.}
      \label{tb:isom-classes}
   \end{table}

\begin{proof}
Assume first that $Y$ is smooth.
Then the only cases without fixed components are $\PP^2$ or $\FF_r$, $r\ge0$.
 By \cite{HaTs99}, there is a unique additive action on $Y$ with a finite number of orbits, which is the image of $\psi_1$ in the former case and $\psi_r$ in the latter case. 
 
Let now $Y$ be singular and $X\to Y$ be its minimal resolution of singularities.
Then the exceptional divisor $E\subset X$ consists of $(-2)$-curves.
In particular, $X$ contains at least one $(-2)$-curve.

By Lemma~\ref{lm:contract-to-Fr-phi}, $X$ is either isomorphic to $(\FF_2,\psi_2)$ or obtained from $(\FF_r,\phi_r)$, where $r\ge1$, by a sequence of blowups of fixed points.
In the former case $Y$ is the weighted projective space $\PP(1,1,2)$ endowed with the image of the action $\psi_2$.
In the latter case Proposition~\ref{pr:blowups}, Corollary~\ref{cr:cases-ACE}, and Proposition~\ref{pr:cases-eq-isom} imply that $Y$ is equivariantly isomorphic to one of surfaces listed in Proposition~\ref{pr:cases-eq-isom}.

The surface $Y$ is obtained from $X$ by contracting all fixed (-2)-curves and possibly the only non-fixed (-2)-curve, which exists in the case $r=2$ and equals the strict transform of the zero section $s$.

Assume first that the exceptional divisor $E\subset X$ 
does not contain a non-fixed curve.
Then, taking into account Lemma~\ref{lm:abs-isom-1-q} for the case (iii), the cases (i), (ii), (iii), and (vi) of Proposition~\ref{pr:cases-eq-isom} lead respectively to rows 10, 11, 12, and 9 of Table~\ref{tb:isom-classes}.
By Lemma~\ref{lm:abs-isom-011}, the cases (iv) and (v) lead to row 8.

Assume now that $E$ contains a non-fixed curve.
Then we have $r=2$ and the cases (ii), (iii), (iv), and (vi)  of Proposition~\ref{pr:cases-eq-isom} lead respectively to rows 6, 7, 3, and 4 of Table~\ref{tb:isom-classes}.

The remaining assertions follow from Table~\ref{tb:isom-classes} and Lemmas~\ref{lm:abs-isom-1-q}--\ref{lm:abs-isom-011}.
\end{proof}

\begin{corollary}\label{cr:inf-orbits}
   Assume that $Y$ is a surface from Table~\ref{tb:isom-classes} distinct from $\PP^2, \PP(1,1,2)$ and $\FF_r$, $r\ge0$. 
   Then $Y$ admits an additive action $\gamma$ with an infinite number of orbits if and only if one of the following cases holds.
   \begin{itemize}
      \item $Y$ is $Bl(\FF_r,[0_{(k)},1],[1])$ with $k\ge r-2$ (no.12), and $(Y,\gamma)$ is equivariantly isomorphic to $Bl(\FF_r,[0_{(k)},0],[1])$;
      \item $Y$ is $Bl(\FF_r,[1,\infty,1])$  with $r\ge 2$ (nos. 4 and 9), and  $(Y,\gamma)$ is equivariantly isomorphic to $Bl(\FF_r,[0,\infty,1])$;
      \item $Y$ is the contraction of the zero section of $X=Bl(\FF_2,[1,\infty,1])$, and $(X,\gamma)$ is again equivariantly isomorphic to $Bl(\FF_r,[0,\infty,1])$.
   \end{itemize}
   In all cases $\gamma$ is the only action with  an infinite number of orbits up to an equivariant automorphism.
   In the first case the action $\gamma$ fixes pointwise the curve $[0_{(k)},1]$, whereas in the second and third cases $\gamma$ fixes the curve $[1,\infty,1]$ or its strict transform respectively.
\end{corollary}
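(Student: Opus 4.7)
The plan is to translate the condition of having infinitely many orbits into the existence of a pointwise fixed boundary component on the minimal resolution, and then to rerun the classification of Section~\ref{sec:blowup-models} while allowing an extra fixed $(-1)$-component alongside the fixed $(-2)$-chain.

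First I would observe that the complement of the open orbit in $Y$ decomposes into a finite union of $\Ga^2$-orbits unless it contains a pointwise fixed curve; hence $\gamma$ has infinitely many orbits if and only if some boundary component of $Y$ is pointwise fixed. Lifting via Lemma~\ref{lm:lift-desing} to the minimal resolution $X=\widetilde Y$, any such fixed curve in $Y$ must come from a pointwise fixed boundary curve of $X$ that is not contracted by $X\to Y$, i.e., not contained in the ADE-chain.

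Next, by Lemma~\ref{lm:contract-to-Fr-phi} there is an equivariant contraction $X\to(\FF_r,\phi_r)$ (the excluded $\PP^2$, $\PP(1,1,2)$, and $\FF_r$ are precisely those with no contractible fixed $(-2)$-chain). I would then repeat the case analysis of Proposition~\ref{pr:blowups}, but now admitting exactly one fixed $(-1)$-exceptional. Lemma~\ref{lm:fixed-curves} pins down where such a curve may occur: parts~(ii), (iv) and (vii) give a fixed $(-1)$-exceptional on blowing up $[0_{(k+1)}]$ with $k\le r-2$, or $[0,\infty,w]$ with $r\ge2$ and $w\ne0,\infty$, or $[\infty_{(3)},w]$ with $w\ne0,\infty$ respectively. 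In each configuration the chain of fixed $(-2)$-curves that collapses to a singularity of $Y$ is still one of those in Fig.~\ref{fig:blowups}, whereas the additional $(-1)$-curve descends to $Y$ as a pointwise fixed curve, producing an infinite number of orbits.

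For each configuration I would apply Proposition~\ref{pr:cases-eq-isom} together with the normaliser action of Remark~\ref{rm:norm-action} to reduce to the canonical models $Bl(\FF_r,[0_{(k)},0],[1])$ and $Bl(\FF_r,[0,\infty,1])$, supplemented by the further contraction of the zero section $[\infty]$ when $r=2$. Comparing dual graphs and self-intersections with Table~\ref{tb:isom-classes} identifies each such $(X,\gamma)$ with exactly one table entry, which yields the three stated cases and the parameter constraints $k\ge r-2$ in case~(1) and $r\ge2$ in cases~(2)--(3). The uniqueness of $\gamma$ up to equivariant automorphism reduces to the rigidity of the position of the fixed $(-1)$-curve in the dual graph once the ADE-chain has been fixed, and the pointwise fixed curve is then read off directly from the blowup model as stated in the corollary's final sentence.

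The main obstacle I expect to be the abstract (non-equivariant) identification of each alternative blowup surface with the corresponding entry of Table~\ref{tb:isom-classes}: for instance, showing that $Bl(\FF_r,[0_{(k)},0],[1])$ and $Bl(\FF_r,[0_{(k)},1],[1])$ give the same surface up to isomorphism after contracting the fixed $(-2)$-chain. This amounts to comparing two different minimal resolutions of the same singular surface and should follow from the uniqueness of the minimal resolution together with a direct match of the dual graphs and intersection data, analogous to Lemmas~\ref{lm:abs-isom-1-q}--\ref{lm:abs-isom-011}.
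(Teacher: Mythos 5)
Your overall strategy is a genuine alternative to the paper's. You propose to re-run the classification of Proposition~\ref{pr:blowups} allowing one pointwise fixed non-contracted component, and then to match the resulting models \emph{abstractly} with the entries of Table~\ref{tb:isom-classes}. The paper instead fixes the table entry $X=Bl(\FF_r,p_1,\ldots,p_k)$, observes that any other action $\gamma$ preserves negative curves so the \emph{same} contraction $\sigma_X\colon X\to\FF_r$ is $\gamma$-equivariant, writes $\gamma|_{\FF_r}=g\phi_r g^{-1}$ for some $g\in\Aut_f(\FF_r)$, and concludes $(X,\gamma)\cong Bl(\FF_r,g\cdot p_1,\ldots,g\cdot p_k)$ with the standard $\phi_r$; the question then reduces to deciding for which $g$ some exceptional component over $g\cdot p_j$ becomes $\phi_r$-fixed (Lemma~\ref{lm:fixed-curves}), which immediately yields $[0_{(k)},1]\mapsto[0_{(k)},0]$ and $[1,\infty,1]\mapsto[0,\infty,1]$ as the only possibilities. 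The paper's route is more economical because it never has to re-classify configurations or re-identify surfaces; your route also has to dispose of spurious configurations (e.g. the fixed curve at $[\infty_{(3)},w]$ from Lemma~\ref{lm:fixed-curves}(vii), which only reappears as $[0,\infty,w']$ on $\FF_{r+1}$ after the contraction of Corollary~\ref{cr:cases-ACE}).

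The genuine gap is in the step you yourself flag as the main obstacle. ``A direct match of the dual graphs and intersection data'' does not establish that two blowups of $\FF_r$ are isomorphic surfaces --- equal weighted dual graphs are necessary but far from sufficient (indeed row 11 of Table~\ref{tb:isom-classes} is a single surface carrying a one-parameter family of inequivalent actions, all with identical dual graphs); and ``comparing two different minimal resolutions of the same singular surface'' presupposes exactly what has to be proved. The correct mechanism, which is what Lemma~\ref{lm:norm-isom} and Lemmas~\ref{lm:abs-isom-1-q}--\ref{lm:abs-isom-011} actually provide and what the paper's proof uses, is to exhibit an explicit $g\in\Aut_f(\FF_r)$ \emph{outside} the normalizer $N(\phi_r)$ carrying one set of blowup centers to the other, e.g.\ $(x,y)\mapsto(x+a_{r-k}y^{r-k},y)$ sending $[0_{(k)},0]$ to $[0_{(k)},a_{r-k}]$, and $(x,y)\mapsto(x+a_ry^r,y)$ sending $[0,\infty,1]$ to $[a_r,\infty,w']$. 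For the same reason, your uniqueness argument via ``rigidity of the position of the fixed $(-1)$-curve in the dual graph'' is insufficient: uniqueness of $\gamma$ up to equivariant isomorphism requires showing that the normalizer acts transitively on the remaining parameters ($w$ in $[0,\infty,w]$, etc.), which is the content of Remark~\ref{rm:norm-action}, not a combinatorial statement about the graph.
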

\begin{proof}
   Let $X$ be the minimal singularity resolution of $Y$ and $\sigma_X\colon X\to (\FF_r,\phi_r)$ be the equivariant morphism corresponding to the indicated blowup model of $X$.
   Consider another $\Ga^2$-action $\gamma$ on $X$.
   Then it preserves negative curves of $X$, and by Lemma~\ref{lm:lift-desing} and Corollary~\ref{cr:Fr-model} $\sigma_X$ is $\gamma$-equivariant.

   Since $\gamma$ fixes $f$, it is isomorphic to $\phi_r$ on $\FF_r$. 
   Let $g\in\Aut_f(\FF_r)$ the corresponding isomorphism.
   Then $(X,\gamma)$  is equivariantly isomorphic to $Bl(\FF_r,g\cdot p_1,\ldots,g\cdot p_k)$, where $X=Bl(\FF_r,p_1,\ldots,p_k)$.
In particular, $g\cdot p_j$ are $\phi_r$-fixed points.
   
If all the exceptional components at $g\cdot p_j$ are non-fixed w.r.t. $\phi_r$, then we fall back into one of the cases of Proposition~\ref{pr:cases-eq-isom}.
   Otherwise, there is $p_j$ such that the exceptional component at $p_j$ is not fixed, but the one at $g\cdot p_j$ is fixed.
   By comparing Proposition~\ref{pr:cases-eq-isom} and Lemma~\ref{lm:fixed-curves}, 
   we see that this happens only if either $p=[0_{(k)},1], g\cdot p=[0_{(k)},0]$, where $k\ge1$, or $p=[1,\infty,1], g\cdot p=[0,\infty,1]$.
   Both cases are possible, since an automorphism $g\colon (x,y)\mapsto (x+a_{r-k}y^{r-k},y)$, where $a_{r-k}\in\K$,  sends  $[0_{(k)},0]$ to $[0_{(k)},a_{r-k}]$ and an automorphism $g\colon (x,y)\mapsto (x+a_ry^r,y)$ sends $[0,\infty,1]$ to $[a_r,\infty,w']$ for some $w'\neq0,\infty$.
   The statement is easily seen.
\end{proof}

\begin{remark}\label{rm:moduli}
   Let $X$ be a blowup of $\FF_r$, where $r\ge2$, at two distinct points of the distinguished fiber $f$ not lying on the zero section $s$.
   Then  $X$ is a smooth projective surface, which by Theorem~\ref{th:main} admits a one-dimensional family of pairwise non-isomorphic additive actions.
\end{remark}

\begin{remark}\label{rm:unique-singularity}
   Any normal $\Ga^2$-surface $Y$ with a finite number of orbits contains at most one singular point.
   Indeed, the boundary $D$, which is the complement to the open orbit in $Y$, is connected, since the open orbit is affine.
   Assume that $Y$ contains two singular points. 
   Let $C_1,\ldots,C_k$ be irreducible curves in $D$ that connect them.
   Then each curve $C_i$, $i=1,\ldots,k$, contains at least two nodes of $D$.
   Thus, $C_i$ cannot contain a 1-dimensional $\Ga^2$-orbit, hence it is fixed, a contradiction.
\end{remark}

\begin{remark}
   Given the minimal resolution $X$ of a singular surface $Y$ in Table~\ref{tb:isom-classes}, its exceptional divisor $E$ consists of fixed points if $Y$ is from rows 8--12.
\end{remark}

\bibliographystyle{plainurl}
\bibliography{citations}

\end{document}